\documentclass[11pt,a4paper]{amsart}

\usepackage[latin1]{inputenc}
\usepackage[english]{babel}
\usepackage{amsmath,amsfonts,amssymb,mathrsfs}
\usepackage{tikz}
\usepackage[all]{xy}
\usepackage{xcolor}
\usepackage{hyperref} 

\theoremstyle{plain}
\newtheorem{Thm}{Theorem}
\newtheorem{ThmX}{Theorem}

\newtheorem{Prop}[Thm]{Proposition}
\newtheorem{Lemma}[Thm]{Lemma}
\newtheorem{Cor}[Thm]{Corollary}

\theoremstyle{definition}
\newtheorem{Defn}[Thm]{Definition}

\newtheorem*{Def*}{Definition}

\theoremstyle{remark}
\newtheorem*{Remark}{Remark}


\let \seq=\subseteq

\let \0=\emptyset
\let \1=\infty
\let \al=\alpha
\let \be=\beta

\let \De=\Delta
\let \la=\lambda

\let \si=\sigma

\let \f=\varphi
\let \p=\ldots
\let \h=\textrm

\let \cd=\cdots


\let \cov=\vartriangleleft
\let \voc=\vartriangleright
\let \ncov=\ntriangleleft

\let \i=\iota
\let \t=\theta


\newcommand{\WLOGtwo}{without loss of generality}
\newcommand{\Th}{Theorem}

\newcommand{\Co}{Corollary}

\newcommand{\Pro}{Proposition}
\newcommand{\Pros}{Propositions}
\newcommand{\Def}{Definition}

\newcommand{\Exa}{Example}

\newcommand{\Con}{Conjecture}


\newcommand{\I}{I_n}
\newcommand{\Io}{I}
\newcommand{\F}{F_n}
\newcommand{\Fo}{F}
\newcommand{\FA}{F_n^A}
\newcommand{\Fn}{F_n^{A-\{n\}}}
\newcommand{\Fm}{F_n^{\leq a}}  
\newcommand{\Fs}{F_n^{\geq a}}  
\newcommand{\Fi}{F_n^{a_1:a_2}} 
\newcommand{\J}{\mathfrak{I}}
\newcommand{\id}{\mathrm{id}}

\DeclareMathOperator{\inv}{inv}
\DeclareMathOperator{\exc}{exc}
\DeclareMathOperator{\ct}{ct}
\DeclareMathOperator{\ict}{ict}
\DeclareMathOperator{\di}{di}




\newcommand{\Cam}[2]{Cambridge Studies in Advanced Mathe\-matics, vol.~#1, Cambridge University Press, Cambridge, #2}
\newcommand{\Pam}[2]{Pure and Applied Mathe\-matics, vol.~#1, Academic Press, New York, #2}
\newcommand{\Spr}[2]{Graduate Texts in Mathe\-matics, vol.~#1, Springer, New York, #2}

\newcommand{\Adv}[2]{\emph{Adv. Math.} \textbf{#1} (#2)}                   
\newcommand{\Geom}[2]{\emph{Geom. Dedicata} \textbf{#1} (#2)}              
\newcommand{\Algcomb}[2]{\emph{J. Algebraic Combin.} \textbf{#1} (#2)}     
\newcommand{\Proc}[2]{\emph{Proc. Amer. Math. Soc.} \textbf{#1} (#2)}      
\newcommand{\Trams}[2]{\emph{Trans. Amer. Math. Soc.} \textbf{#1} (#2)}    

\title[The Bruhat order on conjugation-invariant sets of involutions]{The Bruhat order on conjugation-invariant sets of involutions in the symmetric group}
\author{Mikael Hansson}
\thanks{}
\address{Department of Mathematics, Link\"oping University, SE-581 83, Link\"oping, Sweden}
\email{mikael.hansson@liu.se}

\begin{document}

\begin{abstract}
Let $\I$ be the set of involutions in the symmetric group $S_n$, and for $A \seq \{0,1,\p,n\}$, let
\[
\FA=\{\si \in \I \mid \h{$\si$ has $a$ fixed points for some $a \in A$}\}.
\]
We give a complete characterisation of the sets $A$ for which $\FA$, with the order induced by the Bruhat order on $S_n$, is a graded poset. In particular, we prove that $\F^{\{1\}}$ (i.e., the set of involutions with exactly one fixed point) is graded, which settles a conjecture of Hultman in the affirmative. When $\FA$ is graded, we give its rank function. We also give a short new proof of the EL-shellability of $\F^{\{0\}}$ (i.e., the set of fixed point-free involutions), which was recently proved by Can, Cherniavsky, and Twelbeck.

\bigskip

\begin{sloppypar}
\noindent \textbf{Keywords:} Bruhat order, symmetric group, involution, conjugacy class, graded poset, EL-shellability
\end{sloppypar}
\end{abstract}

\maketitle


\section{Introduction} \label{intro}

Partially ordered by the Bruhat order, the symmetric group $S_n$ is a graded poset whose rank function is given by the number of inversions, and Edelman~\cite{Edelman} proved that it is EL-shellable. Incitti~\cite{Incitti} proved that the set $\I$ of involutions in $S_n$ is graded with rank function given by the average of the number of inversions and the number of exceedances, and that it is EL-shellable. Hultman~\cite{Hultman3} studied (in a more general setting, which we shall describe shortly) the sets $\F^0$ and $\F^1$ of involutions with no fixed points and with exactly one fixed point, respectively. He proved that $\F^0$ is graded and conjectured that the same is true for $\F^1$. Can, Cherniavsky, and Twelbeck~\cite{C-C-T} recently proved that $\F^0$ is EL-shellable.

We consider the following generalisation. For $a \in \{0,1,\p,n\}$, let $\F^a$ be the conjugacy class in $S_n$ consisting of the involutions with $a$ fixed points, and for $A \seq \{0,1,\p,n\}$, let
\[
\FA=\bigcup_{a \in A}\F^a.
\]
Both $\I$ and $\FA$ are regarded as posets with the order induced by the Bruhat order on $S_n$. Note that
\[
\FA=\{\si \in \I \mid \h{$\si$ has $a$ fixed points for some $a \in A$}\}.
\]
Also note that for all elements in $\I$, the number of fixed points equals $n$ modulo 2. Hence, we may assume that all members of $A$ have the same parity as $n$.

Depicted in Figures~\ref{big} and \ref{med}, are the Hasse diagrams of $\Io_4$, $\Fo_4^0$, and $\Fo_4^2$.

\begin{figure}[htbp]
\begin{center}
\begin{tikzpicture}[xscale=1.5,yscale=1]
  \node at (0,0) {$\diamond$}; \node at (0,4) {$\circ$};
  \node at (-1,1) {$\bullet$}; \node [right] at (0,1) {\tiny $1324$}; \node at (1,1) {$\bullet$};
  \node at (-1,2) {$\bullet$}; \node [left] at (0,2) {\tiny $2143$}; \node at (1,2) {$\bullet$};
  \node at (-0.5,3) {$\circ$}; \node at (0.5,3) {$\bullet$};
  \node [below] at (0,0) {\tiny $1234$}; \node [above] at (0,4) {\tiny $4321$};
  \node [left] at (-1,1) {\tiny $1243$}; \node at (0,1) {$\bullet$}; \node [right] at (1,1) {\tiny $2134$};
  \node [left] at (-1,2) {\tiny $1432$}; \node at (0,2) {$\circ$}; \node [right] at (1,2) {\tiny $3214$};
  \node [left] at (-0.5,3) {\tiny $3412$}; \node [right] at (0.5,3) {\tiny $4231$};
  \draw (0,0)--(1,1)--(1,2)--(0.5,3)--(0,4)--(-0.5,3)--(-1,2)--(-1,1)--(0,0);
  \draw (0,0)--(0,1)--(1,2)--(-0.5,3);
  \draw (0,0)--(0,1)--(-1,2)--(0.5,3);
  \draw (1,1)--(0,2)--(-0.5,3);
  \draw (-1,1)--(0,2)--(0.5,3);
\end{tikzpicture}
\end{center}
\caption{Hasse diagram of $\Io_4$ with the involutions with zero ($\circ$), two ($\bullet$), and four ($\diamond$) fixed points indicated.} \label{big}
\end{figure}
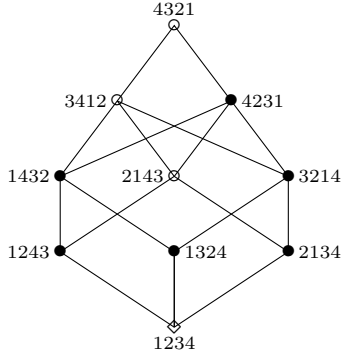

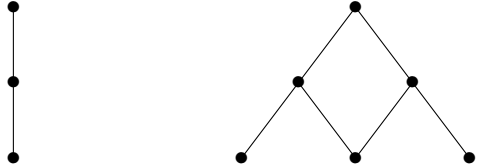
\begin{figure}[htbp]
\begin{center}
\begin{tikzpicture}[xscale=1.5,yscale=1]
  \node at (0,0) {$\bullet$}; \node at (0,1) {$\bullet$}; \node at (0,2) {$\bullet$};
  \draw (0,0)--(0,1)--(0,2);
  \node at (2,0) {$\bullet$}; \node at (3,0) {$\bullet$}; \node at (4,0) {$\bullet$};
  \node at (2.5,1) {$\bullet$}; \node at (3.5,1) {$\bullet$};
  \node at (3,2) {$\bullet$};
  \draw (2,0)--(2.5,1)--(3,2);
  \draw (4,0)--(3.5,1)--(3,2);
  \draw (2.5,1)--(3,0)--(3.5,1);
\end{tikzpicture}
\end{center}
\caption{Hasse diagrams of $\Fo_4^0$ (left) and $\Fo_4^2$ (right).} \label{med}
\end{figure}

\medskip

Our main result is a complete characterisation of the sets $A$ for which $\FA$ is graded. In particular, we prove that $\F^1$ is graded.

Informally, $\FA$ is graded precisely when $A-\{n\}$ is empty or an ``interval,'' which may consist of a single element if it is 0, 1, or $n-2$. The following theorem, which is the main result of this paper, makes the above precise. It also gives the rank function of $\FA$ when it exists.

\begin{Thm} \label{MT}
The poset $\FA$ is graded if and only if $A-\{n\}=\0$ or $A-\{n\}=\{a_1,a_1+2,\p,a_2\}$ with $a_1 \in \{0,1\}$, $a_2=n-2$, or $a_2-a_1 \geq 2$. Furthermore, when $\FA$ is graded, its rank function $\rho$ is given by
\[
\rho(\si)=\frac{\inv(\si)+\exc(\si)-n+\tilde{a}}{2}+\begin{cases}1 & \h{if $n \in A$} \\ 0 & \h{otherwise,}\end{cases}
\]
where $\inv(\si)$ and $\exc(\si)$ denote the number of inversions and exceedances, respectively, of $\si$, and $\tilde{a}=\max(A-\{n\})$. In particular, $\FA$ has rank
\[
\rho(\FA)=\frac{\frac{n-a}{2}(n+a-1)-n+\tilde{a}}{2}+\begin{cases}1 & \h{if $n \in A$} \\ 0 & \h{otherwise,}\end{cases}
\]
where $a=\min{A}$.
\end{Thm}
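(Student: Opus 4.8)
The plan is to reduce everything to Incitti's analysis of the Bruhat order on $\I$: he proved that $\I$ is graded by $R(\si)=\tfrac12(\inv(\si)+\exc(\si))$ and, in the process, gave an explicit list of all covering pairs $\si\lessdot\tau$ in $\I$, in each of which the number of fixed points changes by $0$ or $\pm 2$. Two elementary observations organise the whole argument. First, on $\FA$ the candidate rank function is just $R$ up to a global additive constant, $\rho=R+c$ with $c=\tfrac{\tilde a-n}{2}+[n\in A]$ (one checks separately that the displayed formula still gives $\rho(\id)=0$). Hence any cover of $\FA$ that is also a cover of $\I$ automatically has $\rho$-difference $1$, so the only obstruction to $\FA$ being graded with rank function $\rho$ is a \emph{jump}: a cover $\si\lessdot\tau$ of $\FA$ which is not a cover of $\I$, i.e.\ such that every saturated chain from $\si$ to $\tau$ in $\I$ passes only through involutions whose fixed-point count is not in $A$. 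Second, since an involution with $t$ transpositions has $\exc=t$ and $\inv\ge t$, one gets $\exc\equiv\tfrac{n-a}{2}$ on $\F^a$ and $\min_{\F^a}R=\tfrac{n-a}{2}$, attained exactly by the ``adjacent, pairwise disjoint'' involutions; this will fix the normalisation and, evaluated at the top of $\FA$, give $\rho(\FA)$.

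First I would show that the $A$ in the statement admit no jump. Writing $A-\{n\}=\{a_1,a_1+2,\dots,a_2\}$, a prospective jump $\si\lessdot\tau$ has $\si,\tau\in\F^a$ for some $a_1\le a\le a_2$ and an interior $\I$-vertex lying in $\F^{a-2}$ or $\F^{a+2}$. If $a_1<a<a_2$ both neighbouring classes are in $A$, so there is nothing to do; if $a=a_1$ the only new class an interior vertex could use is $\F^{a_1-2}$, which is empty because $a_1\in\{0,1\}$; and the singleton allowances $\{0\},\{1\},\{n-2\}$ are disposed of the same way — for $0$ and $1$ the relevant neighbour is empty, and for $n-2$ it is $\F^n=\{\id\}$, whose single element is the global minimum and hence never an interior vertex of a cover. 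The delicate case is $a=a_2$, where an interior vertex might lie in $\F^{a_2+2}$, a class not in $A$; here I would return to Incitti's covering moves and verify that any chain $\si\lessdot\mu\lessdot\tau$ with $\si,\tau\in\F^{a_2}$ and $\mu\in\F^{a_2+2}$ can be rerouted through $\F^{a_2}\cup\F^{a_2-2}\subseteq A$, which uses $a_2-a_1\ge2$ (there is ``room below'').

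Next I would prove that every other $A$ produces a jump, hence fails to be graded. When $A-\{n\}$ is not an interval, an omitted intermediate class $\F^{a'}$ between two present classes yields an irreducible step forced to detour through $\F^{a'}$; when $A-\{n\}$ is a singleton $\{a\}$ with $2\le a\le n-4$, I would exhibit an explicit pair $\si\lessdot\mu\lessdot\tau$ in $\I$ with $\si,\tau\in\F^a$, $\mu\notin\F^a$, and no alternative $\I$-chain avoiding $\mu$. Constructing these small examples and — above all — certifying that no second route up from $\si$ to $\tau$ exists (again by running through Incitti's list of covering moves) is the technical heart of the theorem and the step I expect to be the main obstacle.

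Finally, once $\FA$ is known to be graded by $R+c$, the rank function and the rank of $\FA$ are bookkeeping. Normalising $\min_{\FA}\rho=0$ and using $\min_{\F^a}R=\tfrac{n-a}{2}$, the minimum of $R$ over $\FA$ is attained in $\F^{\tilde a}$ (and, when $n\in A$, tied by $\id$, since then $n-2\in A$ forces $\tilde a=n-2$), which fixes $c$ and yields the displayed formula for $\rho$. For the rank of $\FA$, its Bruhat maximum $\hat 1$ lies in $\F^{a}$ with $a=\min A$ — fewest fixed points go highest — and is the involution acting as the longest permutation on the $n-a$ outer positions with the middle $a$ positions fixed; for it $\exc(\hat 1)=\tfrac{n-a}{2}$ and $\inv(\hat 1)=\tfrac{n-a}{2}(n+a-1)$, so substituting into $\rho$ gives $\rho(\FA)=\rho(\hat 1)$.
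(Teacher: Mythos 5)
Your skeleton (reduce to Incitti's covering moves, show that in the graded cases every cover of $\FA$ is a cover of $\I$, exhibit bad configurations otherwise, then do the bookkeeping) matches the paper's strategy, but the argument that no ``jump'' exists in the allowed cases does not go through, and this is where all the content lies. A jump $\si \cov \tau$ need not have length two in $\I$, nor endpoints in the same conjugacy class; more seriously, for the singletons $\{0\}$ and $\{1\}$ the relevant neighbouring class is not the empty $\F^{a_1-2}$ but $\F^{a_1+2}$, which is nonempty: a crossing $ee$-rise (Incitti's Type~5) turns a fixed-point-free involution into one with two fixed points, e.g.\ $2143 \cov 4231$ in $\Io_4$, so interior vertices of an $\I$-chain between two elements of $\F^0$ can perfectly well escape upwards. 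Ruling out the possibility that \emph{every} interior vertex escapes is exactly the content of the paper's Lemmas~\ref{Fm}, \ref{Fs} and \ref{Fi}: one assumes that both the first step of the increasing $\si$-$\tau$-chain and the last step of the decreasing one leave the set, pins both labels down to first coordinate $\di(\si,\tau)$ via Lemma~\ref{ineq}, and derives a contradiction with \Def~\ref{order} (for $\Fm$) or with the monotonicity of the labels along the decreasing chain (for $\Fs$). Your ``delicate case'' $a=a_2$ is deferred rather than proved, and among the cases you dispose of as trivial is $\F^1$, whose gradedness was Hultman's open conjecture; the technical heart of the theorem is therefore missing.

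The negative direction has a logical gap as well: ``a jump exists, hence $\FA$ is not graded'' is not a valid inference, since a poset all of whose covers skip ranks of an ambient graded poset can still be graded with a different rank function. To disprove gradedness one must exhibit, inside a single interval of $\FA$, two maximal chains of different lengths; this is what \Pros~\ref{-o-} and \ref{o-o} do with explicit chains (of lengths $3$ and $2$ in $\Fo_6^2$, and of lengths $k-2$ and $2$ in $\Fo_k^{\{0,k-2\}}$, padded by concatenation to reach the required number of fixed points). Your plan to ``certify that no second route up from $\si$ to $\tau$ exists'' is aimed at the wrong target --- a second route typically does exist; what matters is that its trace in $\FA$ has a different length. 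Finally, even granting the absence of jumps, gradedness needs all minimal elements of $\FA$ to have the same $\I$-rank and a unique (or at least equal-rank) maximum; these are \Pros~\ref{bot} and \ref{top} and require proof, whereas you only compute $\min_{\F^a} R$ and name the candidate top element without verifying that it dominates every element of $\FA$.
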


The following result is direct consequence of \Th~\ref{MT}.

\begin{Cor} \label{gr}
The posets $\F^0$, $\F^1$, $\F^{n-2}$, and $\F^n$ are the only graded conjugacy classes of involutions in $S_n$. Furthermore, the rank function $\rho$ of $\F^0$ and $\F^1$ is given by
\[
\rho(\si)=\frac{\inv(\si)-\lfloor{n/2}\rfloor}{2},
\]
and the rank function $\rho$ of $\F^{n-2}$ is given by
\[
\rho(\si)=\frac{\inv(\si)-1}{2}.
\]
\end{Cor}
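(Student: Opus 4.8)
The plan is to read everything off from Theorem~\ref{MT} by specialising it to singleton sets $A=\{a\}$, combined with the elementary observation that an involution with $a$ fixed points has exactly $(n-a)/2$ exceedances.

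\emph{Classification of the graded classes.} A conjugacy class of involutions in $S_n$ is precisely $\F^a$ for some $a$ with $0\le a\le n$ and $a\equiv n\pmod 2$, and as a poset it equals $\FA$ with $A=\{a\}$. If $a=n$, then $A-\{n\}=\0$, so Theorem~\ref{MT} says the one-element poset $\F^n=\{\id\}$ is graded (trivially of rank $0$). If $a<n$, then $A-\{n\}=\{a\}$ is a singleton, so in the notation of Theorem~\ref{MT} we have $a_1=a_2=a$; the condition $a_2-a_1\ge 2$ cannot hold, so $\F^a$ is graded exactly when $a_1=a\in\{0,1\}$ or $a_2=a=n-2$. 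Collecting cases, the graded conjugacy classes of involutions in $S_n$ are exactly $\F^0$, $\F^1$, $\F^{n-2}$, and $\F^n$ --- with the understanding that, for a fixed $n$, only those with $a\equiv n\pmod 2$ actually occur, and that these classes may coincide for small $n$ (e.g.\ $\F^0=\F^{n-2}$ when $n=2$, and $\F^1=\F^{n-2}$ when $n=3$).

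\emph{The rank functions.} For $A=\{a\}$ with $a<n$ we have $n\notin A$ and $\tilde a=a$, so the formula in Theorem~\ref{MT} reads $\rho(\si)=\tfrac12\bigl(\inv(\si)+\exc(\si)-n+a\bigr)$. Substituting $\exc(\si)=(n-a)/2$ gives $\rho(\si)=\tfrac12\bigl(\inv(\si)-(n-a)/2\bigr)$. For $a=0$ (which forces $n$ even) and for $a=1$ (which forces $n$ odd) one has $(n-a)/2=\lfloor n/2\rfloor$, so in both cases $\rho(\si)=\tfrac12\bigl(\inv(\si)-\lfloor n/2\rfloor\bigr)$, which is the asserted rank function of $\F^0$ and $\F^1$. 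For $a=n-2$ one has $(n-a)/2=1$, so $\rho(\si)=\tfrac12\bigl(\inv(\si)-1\bigr)$, the asserted rank function of $\F^{n-2}$.

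There is no real obstacle here: once Theorem~\ref{MT} is available, the corollary is pure bookkeeping. The only two points worth a line of justification are the exceedance count $\exc(\si)=(n-a)/2$ for an involution $\si$ with $a$ fixed points --- immediate, since each $2$-cycle $(i\ j)$ with $i<j$ contributes exactly the single exceedance $i$, while fixed points contribute none --- and the remark that the parity of $a$ forces $(n-a)/2=\lfloor n/2\rfloor$ precisely when $a\in\{0,1\}$, which is what lets the rank functions of $\F^0$ and $\F^1$ be expressed by a single formula.
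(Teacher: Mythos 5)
Your proposal is correct and follows exactly the route the paper intends: the paper offers no separate proof of Corollary~\ref{gr}, stating only that it is a direct consequence of Theorem~\ref{MT}, and your specialisation to $A=\{a\}$ together with the observation $\exc(\si)=(n-a)/2$ is precisely the bookkeeping being left to the reader. The arithmetic (in particular $\tfrac{n-a}{2}-n+a=-\tfrac{n-a}{2}$ and the identification of $(n-a)/2$ with $\lfloor n/2\rfloor$ for $a\in\{0,1\}$) checks out.
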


It is well known that $\F^{n-2}$ is graded (in fact, it coincides with the root poset of the Weyl group $A_{n-1} \cong S_n$). As was mentioned above, the gradedness of $\F^0$ and $\F^1$ were proved and conjectured, respectively, by Hultman~\cite{Hultman3}. These two posets are special cases of a more general construction from the same paper, which we now describe.\footnote{The results below are taken from \cite{Hultman1,Hultman2,Hultman3}. In general, we do not indicate which results are from which paper. For general Coxeter group terminology and results, see \cite{B-B}.}

\medskip

Given a finitely generated Coxeter system $(W,S)$ and an involutive automorphism $\t$ of $(W,S)$ (i.e., a group automorphism $\t$ of $W$ such that $\t(S)=S$ and $\t^2=\id$), let
\[
\i(\t)=\{\t(w^{-1})w \mid w \in W\}
\]
and
\[
\J(\t)=\{w \in W \mid \t(w)=w^{-1}\}
\]
be the sets of \emph{twisted identities} and \emph{twisted involutions}, respectively. Clearly, $\i(\t) \seq \J(\t) \seq W$. Note that when $\t=\id$, $\i(\t)$ and $\J(\t)$ reduce to the sets of the (ordinary) identity and (ordinary) involutions in $W$. Each subset $X \seq W$ is regarded as a poset with the order induced by the Bruhat order on $W$. When $W$ is the symmetric group $S_n$, there is a unique non-trivial automorphism of $(W,S)$, mapping $s_i=(i,i+1)$ to $s_{n-i}$.

We say that $\t$ has the \emph{no odd flip property} if the order of $s\t(s)$ is even or infinite for all $s \in S$ with $s \neq \t(s)$. If $W$ is finite and irreducible, then $\t$ has the no odd flip property, unless $W$ is of type $A_{2n} \cong S_{2n+1}$ or $I_2(2n+1)$ for some $n \geq 1$, and $\t$ is the unique non-trivial automorphism. The poset $\J(\t)$ is always graded. Furthermore, we have the following result, from which it follows that $\F^0$ is graded, as we shall see.

\begin{ThmX}[\mbox{\cite[\Th~4.6 and \Pro~6.7]{Hultman3}}] \label{Hultman}
If $\t$ has the no odd flip property, then $\i(\t)$ is graded with the same rank function as $\J(\t)$.
\end{ThmX}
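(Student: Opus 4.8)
The plan is to realise $\i(\t)$ as an order ideal of the graded poset $\J(\t)$; gradedness of $\i(\t)$, with the same rank function, then follows formally. I would use the standard $\underline{S}$-action on $\J(\t)$: for $s \in S$ and $w \in \J(\t)$ one has $\ell(sw\t(s)) \in \{\ell(w)-2,\ell(w),\ell(w)+2\}$, with $sw\t(s) = w$ in the middle case, and every $w \in \J(\t)$ has reduced $\underline{S}$-expressions $w = \underline{s_{i_1}} \ast \cdots \ast \underline{s_{i_k}} \ast e$, all of length $k = \rho(w)$, in which each step is an upward cover of $\J(\t)$, either \emph{full} ($\ell$ rises by $2$) or \emph{half} ($\ell$ rises by $1$); moreover every cover of $\J(\t)$ raises $\ell$ by one or two. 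The number $h(w)$ of half covers in a reduced $\underline{S}$-expression of $w$ satisfies $2\rho(w) = \ell(w) + h(w)$, so $h(w) = 2\rho(w) - \ell(w)$ is a well-defined non-negative integer, and $h$ is Bruhat-monotone, since along a cover it is unchanged (full) or rises by one (half). Hence $\{w \in \J(\t) : h(w) = 0\}$ is an order ideal of $\J(\t)$.

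The crux is the identification $\i(\t) = \{w \in \J(\t) : h(w) = 0\}$. The inclusion ``$\supseteq$'' is easy: an all-full reduced $\underline{S}$-expression of $w$ gives $w = z\,\t(z^{-1})$ with $z = s_{i_1}\cdots s_{i_k}$, which must then be reduced, and $z\,\t(z^{-1}) = \t(x^{-1})x$ for $x = \t(z^{-1})$, so $w \in \i(\t)$. The inclusion ``$\subseteq$'', where the no odd flip hypothesis is used, I would prove by induction on $\ell(x)$, for $w = \t(x^{-1})x$. Writing $x = x's$ with $\ell(x') = \ell(x)-1$, one has $w = \t(s)\,w'\,s$, where $w' = \t((x')^{-1})x' \in \i(\t)$ satisfies $h(w') = 0$ (equivalently $\rho(w') = \ell(w')/2$) by induction. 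Since $w$ is obtained from $w'$ by a twisted conjugation by $s$, $\ell(w) \in \{\ell(w')-2,\ell(w'),\ell(w')+2\}$; in each case one checks, from the basic properties of the $\underline{S}$-action --- such a twisted conjugation, when it changes length, changes it by $\pm 2$ and $\rho$ by $\pm 1$, and when it fixes length it fixes the element --- that $\rho(w) = \ell(w)/2$, i.e., $h(w) = 0$. The hypothesis is needed precisely to keep this dichotomy in force for generators $s$ with $s \neq \t(s)$; an odd flip would allow the step from $w'$ to $w$ to require a genuine half cover, producing an element of $\i(\t)$ with $h > 0$ and destroying the identification.

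Granting $\i(\t) = \{h = 0\}$, the conclusion is soft. The set $\i(\t)$ is then an order ideal of $\J(\t)$ containing its minimum $e$. Let $\de < \de'$ in $\i(\t)$, and let $\de = c_0 \lessdot c_1 \lessdot \cdots \lessdot c_m = \de'$ be a maximal chain of the induced subposet $\i(\t)$. It is a chain of $\J(\t)$ that cannot be refined there, since any $y \in \J(\t)$ with $c_i < y < c_{i+1}$ would satisfy $y < \de'$, hence lie in $\i(\t)$ by downward-closure, contradicting $c_i \lessdot c_{i+1}$ in $\i(\t)$. So it is a maximal chain of $\J(\t)$ from $\de$ to $\de'$, and since $\J(\t)$ is graded by $\rho$ its length is $\rho(\de') - \rho(\de)$, independent of the chain. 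Hence $\i(\t)$ is graded with rank function $\rho|_{\i(\t)}$, which by the previous paragraph equals $\de \mapsto \ell(\de)/2$.

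The main obstacle is the inclusion $\i(\t) \subseteq \{h = 0\}$ --- equivalently, $\rho(\de) = \ell(\de)/2$ for all $\de \in \i(\t)$: making the inductive step rigorous comes down to a careful case analysis of twisted conjugation of a twisted involution by a simple reflection, which is exactly what the no odd flip hypothesis is there to control. I expect this to be the bulk of the proof, everything else being bookkeeping with the $\underline{S}$-action and the order-ideal observation.
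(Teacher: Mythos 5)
The paper does not actually prove this statement---it is quoted verbatim from Hultman \cite{Hultman3}---so your attempt can only be measured against the mathematics itself. Measured that way, it has a fatal gap: the strategy of realising $\i(\t)$ as an order ideal of $\J(\t)$ cannot work, because $\i(\t)$ is in general \emph{not} down-closed in $\J(\t)$, even under the no odd flip hypothesis. Take $W=S_4$ with $\t(w)=w_0ww_0$ (which has the no odd flip property). As recorded in Section~\ref{intro}, $w_0 \cdot \i(\t)=\Fo_4^0$ and $w_0 \cdot \J(\t)=\Io_4$, and Figure~\ref{big} exhibits the cover $2143 \cov 4231$ in $\Io_4$ with $2143 \in \Fo_4^0$ but $4231 \notin \Fo_4^0$. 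Translated through the anti-isomorphism, $s_2=w_0 \cdot 4231$ lies strictly below $s_2s_1s_3s_2=w_0 \cdot 2143 \in \i(\t)$ in $\J(\t)$, yet $s_2 \notin \i(\t)$; in fact $s_2 \cov s_2s_1s_3s_2$ is a cover of $\J(\t)$ (ranks $1$ and $2$). This single cover also refutes two of your auxiliary claims: it raises $\ell$ from $1$ to $4$, so covers of $\J(\t)$ can raise length by more than two; and $h(s_2)=2\cdot 1-1=1$ while $h(s_2s_1s_3s_2)=2\cdot 2-4=0$, so $h$ is not Bruhat-monotone and $\{h=0\}$ is not an order ideal. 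The underlying error is conflating covers of the poset $\J(\t)$ with single steps of the $\underline{S}$-action: by the subword property for twisted involutions, a cover corresponds to deleting a letter \emph{anywhere} in a reduced $\underline{S}$-word, not only the last one, and deleting an interior letter can change $\ell$ by more than $2$ and can create half steps.

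What survives of your proposal is the identity $2\rho(w)=\ell(w)+h(w)$ and the assertion that elements of $\i(\t)$ satisfy $\rho(w)=\ell(w)/2$ under the no odd flip hypothesis; that is indeed where the rank function comes from, and your induction on $\ell(x)$ for $w=\t(x^{-1})x$ is a reasonable way to establish it. But once the order-ideal reduction collapses, the gradedness of $\i(\t)$ is no longer ``soft'': one must show directly that an unrefinable chain of the induced subposet $\i(\t)$ is in fact saturated in $\J(\t)$, i.e.\ that whenever $u<w$ in $\i(\t)$ with $\rho(w)-\rho(u)\geq 2$ there is an element of $\i(\t)$ strictly between them. That saturation argument is the real content of Hultman's Theorem~4.6 and is missing from your write-up. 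It is also exactly the shape of the arguments this paper does carry out for the sets $\FA$ (which, for the same reason, are not order ideals of $\I$): compare Lemma~\ref{graded} with Lemmas~\ref{Fm}, \ref{Fs}, and \ref{Fi}.
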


If $W$ is finite, it contains a greatest element $w_0$, and $\t(w)=w_0ww_0$ defines an involutive automorphism of $(W,S)$. Since $\i(\t)=\{w_0w^{-1}w_0w \mid w \in W\}$ and $\J(\t)=\{w \in W \mid w_0ww_0=w^{-1}\}$,
\[
w_0 \cdot \i(\t)=\{w^{-1}w_0w \mid w \in W\}
\]
and
\[
w_0 \cdot \J(\t)=\{w_0w \mid w_0ww_0=w^{-1}\}=\{w_0w \mid (w_0w)^2=e\}.
\]
Since left (as well as right) multiplication by $w_0$ is a poset anti-automorphism (i.e., an order-reversing bijection whose inverse is order-reversing), $\i(\t)$ is isomorphic to the dual of $[w_0]$, where $[w_0]$ is the conjugacy class of $w_0$, and $\J(\t)$ is isomorphic to the dual of $I(W)$, where $I(W)$ is the set of involutions in $W$.

When $W$ is the symmetric group $S_n$, this $\t$ is the unique non-trivial automorphism of $(W,S)$, and $I(W)=\I$. For $n$ even, $[w_0]=\F^0$, and for $n$ odd, $[w_0]=\F^1$. Thus it follows from \Th~\ref{Hultman} that $\F^0$ is graded.

It was conjectured by Hultman~\cite[\Con~6.1]{Hultman3} that $\i(\t)$ is graded when $W=A_{2n}$. As we have seen, this is equivalent to $\F^1$ being graded, which is the case (\Co~\ref{gr}). Since $\i(\t)$ is graded whenever $W$ is dihedral, as is easily seen, it therefore follows that $\i(\t)$ is graded whenever $W$ is finite and irreducible. From this, we get the following:

\begin{Thm} \label{finite}
If $W$ is finite, then $\i(\t)$ is graded.
\end{Thm}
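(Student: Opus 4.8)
By the discussion preceding the statement, $\i(\t)$ is graded whenever $W$ is finite and irreducible: if $\t$ has the no odd flip property (in particular if $\t=\id$), this is \Th~\ref{Hultman}; otherwise $W$ is of type $A_{2n}\cong S_{2n+1}$ or $I_2(2n+1)$ with $\t$ the unique non-trivial automorphism, and then $\i(\t)$ is either isomorphic to the dual of $[w_0]=\F^1$ --- which is graded by \Co~\ref{gr}, the dual of a graded poset being graded --- or a small, explicitly describable subposet of a dihedral group, which is checked to be graded directly. So the plan is simply to reduce the general finite case to the irreducible one. (Here ``graded'' is understood to mean that every interval admits a rank function; since $\i(\t)$ always has the least element $e$ but need not have a greatest element, this is equivalent to the existence of a global rank function.)

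For the reduction, write $W=W_1\times\cdots\times W_k$ as a product of irreducible Coxeter systems $(W_\ell,S_\ell)$, so that $S=S_1\sqcup\cdots\sqcup S_k$. Since $\t(S)=S$, the automorphism $\t$ permutes the factors $W_\ell$, and as $\t^2=\id$ this permutation has only orbits of sizes $1$ and $2$. The Bruhat order on a direct product of Coxeter groups is the componentwise order, and the component of $\t(w^{-1})w$ in the factors of a given orbit depends only on the component of $w$ in those factors; hence $\i(\t)$ is a direct product $\prod_O C_O$ over the orbits $O$, each $C_O$ carrying its induced Bruhat order. Since a finite product of graded posets is graded (with rank function the sum of those of the factors), it suffices to show that every $C_O$ is graded.

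It remains to identify the $C_O$. For a size-$1$ orbit $O=\{W_\ell\}$, the map $\t$ restricts to an involutive automorphism $\t_\ell$ of $(W_\ell,S_\ell)$ and $C_O=\i(\t_\ell)$ with $W_\ell$ finite irreducible, so $C_O$ is graded by the first paragraph. For a size-$2$ orbit $O=\{W_i,W_j\}$, writing $\phi=\t|_{W_i}\colon W_i\to W_j$ (so that $\t|_{W_j}=\phi^{-1}$), a short computation identifies $C_O$ with $\{(x,\phi(x^{-1}))\mid x\in W_i\}$; since $\phi$ is an isomorphism of Coxeter systems and $u\leq v\ToT u^{-1}\leq v^{-1}$ in any Coxeter group, the map $x\mapsto(x,\phi(x^{-1}))$ is a poset isomorphism from $W_i$ with the Bruhat order onto $C_O$, so $C_O$ is graded as well. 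This proves the theorem.

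The only arithmetically substantive ingredient is \Co~\ref{gr}, the gradedness of $\F^1$; the rest is bookkeeping, and the point most in need of care is verifying precisely that the Bruhat order induced on $\i(\t)\seq W$ splits as the componentwise product used above and that a size-$2$ orbit contributes a copy of $W_i$. The dihedral verification and the duality observation in the first paragraph are routine.
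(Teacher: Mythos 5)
Your proposal is correct and follows essentially the same route as the paper: reduce to the irreducible case (handled by \Th~\ref{Hultman}, \Co~\ref{gr} for type $A_{2n}$, and a direct dihedral check) by decomposing $W$ into factors permuted by $\t$, with size-two orbits contributing a copy of a single factor --- which is exactly the alternative the paper records in a footnote, the main text instead invoking the no odd flip property for swapped components. The only cosmetic difference is that you treat all orbits at once where the paper peels off two factors at a time by induction.
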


\begin{proof}
Let $W$ be a finite Coxeter group and let $\t: W \to W$ be an involutive automorphism. If the Coxeter graph of $W$ consists of two disjoint graphs with vertex sets $S_1$ and $S_2$, the Coxeter groups $W_1$ and $W_2$ generated by $S_1$ and $S_2$, respectively, are isomorphic, and $\t(w)=\t(w_1w_2)=w_2w_1$, where $w_i \in W_i$, then $\t$ has the no odd flip property, whence $\i(\t)$ is graded.\footnote{Alternatively, $\i(\t) \cong W_1$ (see \cite[\Exa~3.2]{Hultman3}), whence $\i(\t)$ is graded.} Other\-wise, we may choose $W_1$ and $W_2$ with $W=W_1 \times W_2$, such that there are two involutive automorphisms $\t_1: W_1 \to W_1$ and $\t_2: W_2 \to W_2$ with $\t(w)=\t_1(w_1)\t_2(w_2)$. In this case, it can be seen that $\i(\t) \cong \i(\t_1) \times \i(\t_2)$, whence, by induction, $\i(\t)$ is graded.
\end{proof}

\medskip

Let us also mention a connection to work by Richardson and Springer~\cite{R-S1,R-S2}, who studied a partially ordered set $V$ of orbits of certain symmetric varieties (depending on, inter alia, a group $G$). They did so by defining an order-preserving function $\f: V \to \J(\t) \seq W$ (where the Weyl group $W$ depends on, inter alia, $G$).

When $W=S_n$, $\J(\t)$ is the image of an injective $\f$ (for details, see \cite[\Exa~10.2]{R-S1}). When $n$ is even, the same is true for $\i(\t)$ (see \cite[\Exa~10.4]{R-S1} or \cite[\Exa~3.1]{Hultman3}). Hence, $\I$ and $\F^0$ are isomorphic to the duals of the images of such functions.

However, these are not the only $\FA$ that occur as the image of a $\f$. To describe these sets, and for later purposes, define
\[
\Fm=\bigcup_{i \geq 0}\F^{a-2i} \quad \h{and} \quad \Fs=\bigcup_{i \geq 0}\F^{a+2i},
\]
and for $a_2=a_1+2m$, where $m$ is a positive integer, let
\[
\Fi=\F^{\geq a_1} \cap \F^{\leq a_2}.
\]

As described in \cite{R-S1}, the image of $\f$ can be read off from the corresponding Satake diagram. It follows from Satake diagrams A~III and A~IV in Helgason~\cite[Table~VI]{Helgason} that for each $a \leq n-2$, $\Fs$ is the image of a $\f$. (From Satake diagrams A~I and A~II, it follows that $\J(\t)$ and $\i(\t)$, respectively, are the images of such functions).

\medskip

The remainder of this paper is organised as follows. In Section~\ref{notation}, we agree on notation and gather the necessary definitions and previous results. Then, in Section~\ref{proof}, we prove the main result of this paper (\Th~\ref{MT}). Finally, in Section~\ref{EL}, we give a short new proof of the following result, which was recently proved by Can, Cherniavsky, and Twelbeck.

\begin{ThmX}[\mbox{\cite[\Th~1]{C-C-T}}] \label{C-C-T}
The poset $\F^0$ is EL-shellable.
\end{ThmX}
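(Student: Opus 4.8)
The plan is to exhibit an explicit edge labeling of $\F^0$ and to check the two EL-axioms directly, using the concrete description of the covering relations of $\F^0$ furnished by the analysis of Section~\ref{proof} (equivalently, Incitti's classification of the covers of the Bruhat order on $\I$, restricted to the fixed point-free case). By \Co~\ref{gr}, $\F^0$ is graded with $\rho(\si)=(\inv(\si)-n/2)/2$, so every cover $\si\cov\tau$ raises $\inv$ by exactly $2$; drawing an involution as its arc diagram (perfect matching on $[n]$), such a cover modifies only two arcs, by one of a short list of local rules. To each cover $\si\cov\tau$ I would attach a label $\la(\si,\tau)=(i,j)$ with $1\le i<j\le n$, read off from the positions of the two arcs that the move alters, and order labels lexicographically; this is the natural counterpart for $\F^0$ of Edelman's reflection-order labeling of $S_n$ and of Incitti's labeling of $\I$.

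For an arbitrary interval $[\si,\tau]$ of $\F^0$ I would single out the maximal chain $\si=\si_0\cov\si_1\cov\cdots\cov\si_k=\tau$ built greedily from below: given $\si_t<\tau$, let $\si_{t+1}$ be the element with $\si_t\cov\si_{t+1}\le\tau$ for which $\la(\si_t,\si_{t+1})$ is least (such a cover exists because every interval of the graded poset $\F^0$ is itself graded). This chain is automatically lexicographically first among all maximal chains of $[\si,\tau]$, so the real content is to prove that it is strictly $\la$-increasing and that it is the only strictly increasing maximal chain in $[\si,\tau]$. Both would follow by induction on $\rho(\tau)-\rho(\si)$ from one structural statement --- roughly, a ``straightening'' lemma to the effect that if a maximal chain of $[\si,\tau]$ is strictly increasing then its first label is forced to equal $\la(\si,\si_1)$, after which one removes the step $\si\cov\si_1$ and recurses into the smaller interval $[\si_1,\tau]$. (Equivalently: performing the greedy step first never obstructs any continuation, while performing any other first step forces a smaller label to reappear later, breaking monotonicity.)

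I expect this straightening lemma to be the only genuine obstacle. It amounts to understanding how two of the local arc-diagram moves interact inside one interval, and to ruling out the configurations that would create a second increasing chain; since each move affects only a bounded number of positions, the verification reduces to a finite inspection of intervals supported on a bounded number of points, and it is the brevity of this inspection --- made possible by the covering-relation description of Section~\ref{proof} --- that yields a short proof of \Th~\ref{C-C-T}. As a by-product, the labeling would also pin down the M\"obius function and the homotopy type of every interval of $\F^0$.
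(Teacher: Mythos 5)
There is a genuine gap, and it sits exactly where the paper's proof has its one essential idea. You propose to label each cover $\si \cov \tau$ of $\F^0$ by the pair $(i,j)$ with $\tau=\ct_{(i,j)}(\si)$ and to order the labels by the \emph{standard} lexicographic order. That labelling is not an EL-labelling of $\F^0$: some intervals contain no increasing maximal chain at all, so no amount of ``straightening'' can rescue the plan. Concretely, in $\Fo_6^0$ take $\si=214365$ and $\tau=432165$. The unique increasing $\si$-$\tau$-chain of $\Io_6$ passes through $423165$, which has two fixed points (its first step is a crossing $ee$-rise, Type~5, which always creates fixed points), so it leaves $\Fo_6^0$. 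The only element of $\Fo_6^0$ in the open interval is $341265$, and the unique maximal chain $214365 \cov 341265 \cov 432165$ of this interval in $\Fo_6^0$ carries the labels $(1,4),(1,2)$ --- strictly \emph{decreasing}. This is precisely why both Can--Cherniavsky--Twelbeck and the present paper must \emph{reverse} the order on $\{(i,j) \mid i<j\}$: the correct structural fact (Lemma~\ref{F0}) is that the \emph{decreasing} $\si$-$\tau$-chain of $\I$ always stays inside $\F^0$, and after reversal it becomes the required increasing chain; its uniqueness and lex-minimality then come essentially for free from Lemmas~\ref{incr} and \ref{decr}, since every cover of $\F^0$ is a cover of $\I$.

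Beyond the wrong choice of order, your proposal defers all of its mathematical content to an unproven ``straightening lemma'' asserted to follow from ``a finite inspection of intervals supported on a bounded number of points.'' That reduction is not justified: an EL-labelling requires a unique increasing chain in \emph{every} interval, not only in rank-two ones, and Bruhat intervals are not locally supported in the sense you need. The paper replaces all of this by one short computation in the style of Lemma~\ref{Fm}: with $h=\di(\si,\tau)$, Lemma~\ref{ineq} forces the last label of the decreasing chain (and the first label of the increasing chain) to begin with $h$; since $\si$ is fixed point-free, $h$ is an exceedance of $\si$, and if the penultimate element $\tau_1$ of the decreasing chain had $h$ as a fixed point, one would get $\si[h,h+1]>\tau_1[h,h+1]$, contradicting $\si \leq \tau_1$ by \Def~\ref{order}. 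You would need to identify and prove a statement of this kind; as written, your proposal neither selects a workable labelling nor supplies the argument that makes it work.
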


\section{Notation and preliminaries} \label{notation}

Poset notation and terminology will follow \cite{Stanley}. In particular, if $P$ is a poset and $x \leq y$ in $P$, then $[x,y]=\{z \in P \mid x \leq z \leq y\}$ and $(x,y)=\{z \in P \mid x<z<y\}$. Furthermore, in a finite poset $P$, $\cov$ denotes the covering relation, a chain $x_0<x_1<\cd<x_k$ is \emph{saturated} if $x_{i-1} \cov x_i$ for all $i \in [n]$, $P$ is \emph{bounded} if it has a minimum (denoted by $\hat0$) and a maximum (denoted by $\hat1$), and $P$ is \emph{graded of rank $n$} if every maximal chain has length $n$. In this case, there is a unique \emph{rank function} $\rho: P \to \{0,1,\p,n\}$ such that $\rho(x)=0$ if $x$ is a minimal element of $P$, and $\rho(y)=\rho(x)+1$ if $x \cov y$ in $P$; $x$ has \emph{rank} $i$ if $\rho(x)=i$. An \emph{$x$-$y$-chain} is a \emph{saturated} chain from $x$ to $y$.

Let $P$ be a finite, bounded, and graded poset. An \emph{edge-labelling} of $P$ is a function $\la: \{(x,y) \in P^2 \mid x \cov y\} \to Q$, where $Q$ is a totally ordered set. If $\la$ is an edge-labelling of $P$ and $x_0 \cov x_1 \cov \cd \cov x_k$ is a saturated chain, let $\la(x_0,x_1,\p,x_k)=(\la(x_0,x_1),\la(x_1,x_2),\p,\la(x_{k-1},x_k))$. The chain is said to be \emph{increasing} if $\la(x_{i-1},x_i) \leq \la(x_i,x_{i+1})$ for all $i \in [k-1]$, and \emph{decreasing} if $\la(x_{i-1},x_i)>\la(x_i,x_{i+1})$ for all $i \in [k-1]$. An edge-labelling $\la$ of $P$ is an \emph{EL-labelling} if, for all $x<y$ in $P$, there is exactly one increasing $x$-$y$-chain, say $x_0 \cov x_1 \cov \cd \cov x_k$, and this chain is \emph{lexicographically minimal}, or \emph{lex-minimal}, among the $x$-$y$-chains in $P$ (i.e., if $y_0 \cov y_1 \cov \cd \cov y_k$ is any other $x$-$y$-chain, then $\la(x_{i-1},x_i)<\la(y_{i-1},y_i)$, where $i=\min\{j \in [k] \mid \la(x_{j-1},x_j) \neq \la(y_{j-1},y_j)\}$; this is known as the \emph{lexicographic order}). If $P$ has an EL-labelling, $P$ is said to be \emph{EL-shellable}. The reason for this is the following result, due to Bj\"orner.

\begin{ThmX}[\mbox{\cite[\Th~2.3]{Bjorner}}]
Let $P$ be a finite, bounded, and graded poset. If $P$ is EL-shellable, then it is shellable (i.e., its order complex $\De(P)$ is shellable).
\end{ThmX}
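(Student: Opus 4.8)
The plan is to exhibit an explicit shelling order on the facets of the order complex of the proper part $\ol P = P \setminus \{\hat0, \hat1\}$ (which is what one means by the bounded poset $P$ being shellable), namely the lexicographic order on the maximal chains of $P$ induced by a fixed EL-labelling $\la$. First I would record the easy structural facts: since $P$ is graded of rank $n$, all maximal chains of $P$ have length $n$, so the maximal chains of $\ol P$ — the facets of $\De(\ol P)$ — all have exactly $n-1$ vertices, and $\De(\ol P)$ is pure of dimension $n-2$. List the maximal chains $m_1, m_2, \p$ of $P$ so that $i<j$ whenever $\la(m_i) <_{\h{lex}} \la(m_j)$, ties broken arbitrarily. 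By the standard criterion for shellability it then suffices to prove: for all $i<j$ there exist $k<j$ and a vertex $v \in m_j \setminus \{\hat0,\hat1\}$ with $m_i \cap m_j \seq m_k \cap m_j = m_j \setminus \{v\}$.

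The main step is to locate the first place where $m_i$ departs from $m_j$ and to modify $m_j$ there. Write $m_j \colon \hat0 = x_0 \cov x_1 \cov \cd \cov x_n = \hat1$. As $m_i \ne m_j$, there is a least index $r$ with $x_r \notin m_i$; then $x_0, \p, x_{r-1}$ is an initial segment of $m_i$ as well, so $\la(m_i)$ and $\la(m_j)$ agree in their first $r-1$ entries. Put $p = r-1$ and let $q$ be the least index exceeding $p$ with $x_q \in m_i$; thus $x_p$ and $x_q$ are consecutive in the chain $m_i \cap m_j$ and $q \geq p+2$. The restriction of $m_j$ to $[x_p, x_q]$ is $x_p \cov \cd \cov x_q$, and the restriction of $m_i$ to $[x_p, x_q]$ is a saturated chain $d$ from $x_p$ to $x_q$ meeting $x_p \cov \cd \cov x_q$ only at its endpoints; since $[x_p, x_q]$ is graded, $d$ has length $q - p$, just like $x_p \cov \cd \cov x_q$, and $d$ is distinct from it (they differ at $x_{p+1}$). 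Now comes the key claim: $x_p \cov \cd \cov x_q$ is \emph{not} increasing. Indeed, were it increasing, it would be the unique increasing, and hence lex-minimal, $x_p$-$x_q$-chain; comparing $\la(m_i)$ with $\la(m_j)$ — equal through position $p$, and with positions $p+1, \p, q$ equal to $\la(d)$ and $\la(x_p, \p, x_q)$ respectively — we would get $\la(m_j) <_{\h{lex}} \la(m_i)$, contradicting $i<j$. So $x_p \cov \cd \cov x_q$ has a descent: there is $t$ with $p \leq t \leq q-2$ and $\la(x_t, x_{t+1}) > \la(x_{t+1}, x_{t+2})$.

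To finish I would invoke the EL-axiom on the rank-two interval $[x_t, x_{t+2}]$: its unique increasing maximal chain cannot be $x_t \cov x_{t+1} \cov x_{t+2}$, so it equals $x_t \cov x_{t+1}' \cov x_{t+2}$ for some $x_{t+1}' \ne x_{t+1}$, and by lex-minimality $(\la(x_t, x_{t+1}'), \la(x_{t+1}', x_{t+2})) <_{\h{lex}} (\la(x_t, x_{t+1}), \la(x_{t+1}, x_{t+2}))$. Setting $m_k = (m_j \setminus \{x_{t+1}\}) \cup \{x_{t+1}'\}$, a maximal chain of $P$, we get $\la(m_k) <_{\h{lex}} \la(m_j)$, so $k<j$; the vertex $v = x_{t+1}$ is proper (since $1 \leq p+1 \leq t+1 \leq q-1 \leq n-1$) and lies outside $m_i$ (since $p < t+1 < q$); and $x_{t+1}' \notin m_j$, so $m_k \cap m_j = m_j \setminus \{v\} \qes m_i \cap m_j$. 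This verifies the restriction condition, and Bj\"orner's criterion then yields that $\De(\ol P)$ is shellable.

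I expect essentially all the difficulty to sit in the key claim of the second paragraph — that the first deviation of $m_i$ from $m_j$ runs along a \emph{non-increasing} stretch of $m_j$ — since this is the one point where the hypotheses are genuinely used: boundedness to supply the common bottom $\hat0$ (so that ``first deviation'' is meaningful), gradedness to force the two competing chains inside $[x_p, x_q]$ to have the same length (so that their label sequences line up position by position), and the EL-axiom for both the uniqueness/lex-minimality of the increasing chain and the single-vertex exchange on rank-two intervals. The remaining steps are routine bookkeeping.
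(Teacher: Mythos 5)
The paper does not prove this statement: it is quoted as Bj\"orner's theorem (the cited Theorem~2.3 of his 1980 paper), so there is no internal proof to compare against. Your argument is correct and is essentially Bj\"orner's original one --- order the maximal chains lexicographically by their label sequences, locate the first descent along the stretch of $m_j$ where $m_i$ first deviates (such a descent must exist, else lex-minimality of the increasing chain in $[x_p,x_q]$ would force $\la(m_j)<_{\h{lex}}\la(m_i)$), and swap out the single element $x_{t+1}$ at that descent. The one point you leave implicit is that the label sequences being compared genuinely differ, which is needed to invoke the strict form of lex-minimality as defined here; this holds because two distinct saturated chains with identical label sequences would either both be increasing (contradicting uniqueness) or, in the rank-two case, consist of one weakly increasing and one strictly decreasing pair, so no real gap.
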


For $\si \in S_n$ and $(k,l) \in [n]^2$, let $\si[k,l]=|\{i \leq k \mid \si(i) \geq l\}|$. The Bruhat order on $S_n$ may be defined as follows (see, e.g., \cite[\Th~2.1.5]{B-B}):

\begin{Defn} \label{order}
Let $\si,\tau \in S_n$. Then $\si \leq \tau$ if and only if $\si[k,l] \leq \tau[k,l]$ for all $(k,l) \in [n]^2$.
\end{Defn}

Let us turn to involutions in the symmetric group. Here, notation will follow \cite{Incitti}.

Let $\si \in S_n$. A \emph{rise} of $\si$ is a pair $(i,j) \in [n]^2$ such that $i<j$ and $\si(i)<\si(j)$. A rise $(i,j)$ is called \emph{free} if there is no $k \in (i,j)$ such that $\si(k) \in (\si(i),\si(j))$. An \emph{inversion} is a pair $(i,j) \in [n]^2$ such that $i<j$ and $\si(i)>\si(j)$. An element $i \in [n]$ is a \emph{fixed point} of $\si$ if $\si(i)=i$, an \emph{exceedance} if $\si(i)>i$, and a \emph{deficiency} if $\si(i)<i$. Let $\inv(\si)$ and $\exc(\si)$ denote the number of inversions and exceedances, respectively, of $\si$.

Let $\si \in \I$. A free rise is \emph{suitable} if it is an $ff$-rise (Type~1), an $fe$-rise (Type~2), an $ef$-rise (Type~3), a non-crossing $ee$-rise (Type~4), a crossing $ee$-rise (Type~5), or an $ed$-rise (Type~6). Here $fe$, e.g., means that $i$ is a fixed point of $\si$ while $j$ is an exceedance, and an $ee$-rise is \emph{crossing} if $\si(i)<j$ and \emph{non-crossing} otherwise. The following definition is a very important one.

\begin{Defn} \label{ct}
Let $\si \in \I$ and let $(i,j)$ be a suitable rise of $\si$. We define a new involution $\ct_{(i,j)}(\si)$ as follows:

\medskip

If $(i,j)$ is of Type~1, then $\ct_{(i,j)}(\si)=\si(i,j)$.

If $(i,j)$ is of Type~2, then $\ct_{(i,j)}(\si)=\si(i,j,\si(j))$.

If $(i,j)$ is of Type~3, then $\ct_{(i,j)}(\si)=\si(i,j,\si(i))$.

If $(i,j)$ is of Type~4, then $\ct_{(i,j)}(\si)=\si(i,j)(\si(i),\si(j))$.

If $(i,j)$ is of Type~5, then $\ct_{(i,j)}(\si)=\si(i,j,\si(j),\si(i))$.

If $(i,j)$ is of Type~6, then $\ct_{(i,j)}(\si)=\si(i,j)(\si(i),\si(j))$.

\medskip

See \cite[Table~1]{Incitti} for pictures describing the action of $\ct_{(i,j)}$ on the diagram of $\si$.
\end{Defn}

If $\tau=\ct_{(i,j)}(\si)$ for some suitable rise $(i,j)$ of $\si$, let $\la(\si,\tau)=(i,j)$. By Lemma~\ref{cover}, this defines an edge-labelling of $\I$ (with $\{(i,j) \in [n]^2 \mid i<j\}$ totally ordered by the lexicographic order, i.e., $(i_1,j_1)<(i_2,j_2)$ if and only if $i_1<i_2$, or $i_1=i_2$ and $j_1<j_2$). Whenever we consider an edge-labelling of $\I$, it is this one. If $\la(\si,\tau)=(i,j)$, then $(i,j)$ is the \emph{label} on the cover $\si \cov \tau$; $(i,j)$ is a \emph{label} on a chain if it is the label on some cover of the chain.

Let $\tau \in \I$ and let $(i,j)$ be an inversion of $\tau$. If $(i,j)$ is a suitable rise of some $\si \in \I$ and $\ct_{i,j}(\si)=\tau$, then $\si$ is unique, and we write $\si=\ict_{i,j}(\tau)$.

For $\si<\tau$ in $\I$, let $\di(\si,\tau)=\min\{i \in [n] \mid \si(i) \neq \tau(i)\}$.

\medskip

We shall need the following results, due to Incitti:

\begin{Lemma}[\mbox{\cite[\Th~5.1]{Incitti}}] \label{cover}
Let $\si,\tau \in \I$. Then $\si \cov \tau$ in $\I$ if and only if $\tau=\ct_{(i,j)}(\si)$ for some suitable rise $(i,j)$ of $\si$.
\end{Lemma}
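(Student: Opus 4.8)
The plan is to prove the two implications by isolating two sublemmas: \textbf{(A)} for every suitable rise $(i,j)$ of $\si \in \I$, the pair $\si \cov \ct_{(i,j)}(\si)$ is a Bruhat cover inside $\I$; and \textbf{(B)} whenever $\si < \tau$ in $\I$, there is a suitable rise $(i,j)$ of $\si$ with $\si < \ct_{(i,j)}(\si) \leq \tau$. Granting these, the lemma follows at once: (A) is exactly the ``if'' direction, and for ``only if,'' if $\si \cov \tau$ then the rise produced by (B) gives $\si \cov \ct_{(i,j)}(\si) \leq \tau$, and since nothing lies strictly between $\si$ and $\tau$ we must have $\ct_{(i,j)}(\si)=\tau$. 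Equivalently, (A) says $\ct$-moves are covers and (B) says the $\ct$-moves connect any two comparable elements, so together they pin down the covering relation.

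For sublemma (A) I would go through the six types of Definition~\ref{ct} one at a time. In each case $\ct_{(i,j)}(\si)$ is $\si$ post-composed with a transposition or a product of two transpositions supported on the active set $\{i,j,\si(i),\si(j)\}$, and a direct look at the one-line notation (cf.\ the pictures in \cite[Table~1]{Incitti}) shows the output is again an involution and that every entry of the rank matrix $\bigl(\si[k,l]\bigr)$ weakly increases, with at least one strict increase; hence $\si < \ct_{(i,j)}(\si)$ by Definition~\ref{order}. To see the relation is a cover, I would suppose $\mu \in \I$ satisfies $\si < \mu < \ct_{(i,j)}(\si)$ and derive a contradiction: the two permutations $\si$ and $\ct_{(i,j)}(\si)$ already agree outside the active rows and columns, and the \emph{freeness} of the rise --- the absence of $k \in (i,j)$ with $\si(k) \in (\si(i),\si(j))$ --- leaves no room for an intermediate involution within the active block, which is a short finite check per type. (Alternatively, once Incitti's rank statistic $\tfrac{1}{2}(\inv(\si)+\exc(\si))$ is in hand one checks it jumps by exactly $1$ under each $\ct$-move, which also yields coveredness; but invoking that here is circular if one is proving gradedness of $\I$ simultaneously.)

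The real content is sublemma (B). Here I would set $d=\di(\si,\tau)$, the least coordinate at which $\si$ and $\tau$ disagree; since $\si<\tau$, Definition~\ref{order} forces $\si(d)<\tau(d)$. I would then show that $\si$ admits a \emph{free} rise having $d$ as one of its four active coordinates, with the other endpoint chosen among positions $\geq d$ to be as small as possible subject to ``landing inside'' the discrepancy between $\si$ and $\tau$. The type of this rise --- $ff$, $fe$, $ef$, crossing or non-crossing $ee$, or $ed$ --- is then read off from whether $d$ and the second coordinate are fixed points, exceedances, or deficiencies of $\si$, and one checks case by case that it is \emph{suitable}; the list of six types is exhausted precisely because $\si$ is an involution, so the otherwise-possible local patterns (e.g.\ an $fd$ or $de$ configuration) cannot arise at the minimal discrepancy. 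Finally one must verify that the move does not overshoot, i.e.\ $\ct_{(i,j)}(\si)[k,l] \leq \tau[k,l]$ for all $(k,l)$; I would do this by comparing, cell by cell, the bounded change effected by $\ct_{(i,j)}$ against the inequalities $\si[k,l]\leq\tau[k,l]$, using the minimality built into the choice of $(i,j)$.

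The main obstacle is exactly this last step of (B): the ``increasing but not overshooting'' estimate requires a careful and somewhat lengthy case analysis across the six rise types crossed with the possible local shapes of $\tau$ near $d$, and choosing the second coordinate correctly --- minimal, yet still producing a rise that is both free and suitable --- is the delicate point. Everything else (that $\ct$-moves land in $\I$, that they are Bruhat-increasing, and that freeness rules out intermediate involutions) reduces to a finite inspection of the diagrams.
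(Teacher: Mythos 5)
This lemma is not proved in the paper at all: it is quoted verbatim from Incitti \cite[\Th~5.1]{Incitti}, so there is no in-paper argument to compare yours against. Your two-sublemma architecture --- (A) every $\ct$-move on a suitable rise is a cover in $\I$, and (B) every comparable pair $\si<\tau$ in $\I$ admits a suitable rise $(i,j)$ of $\si$ with $\si<\ct_{(i,j)}(\si)\leq\tau$ --- does match the structure of Incitti's actual proof. But what you have written is a plan rather than a proof: the six-type case analyses that you defer as ``a short finite check'' and ``a careful and somewhat lengthy case analysis'' are precisely where all the content lies (they occupy the technical core of \cite{Incitti}), so the proposal cannot be accepted as a proof.

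Moreover, the one step where you commit to a concrete argument is flawed. In (A) you claim that an involution $\mu$ with $\si<\mu<\ct_{(i,j)}(\si)$ must agree with $\si$ outside the active set $\{i,j,\si(i),\si(j)\}$, so that coveredness reduces to a check ``within the active block.'' The Bruhat order is not local in this sense: if two permutations agree at a position lying strictly between their points of disagreement, an element strictly between them in Bruhat order need not agree with them there (agreement is forced only on a common prefix or suffix, via the rank-matrix characterisation of Definition~\ref{order}; e.g.\ $2134$ lies in $[1234,3214]$ but disagrees with both endpoints at position $2$). So freeness alone does not confine a hypothetical intermediate involution to the active block, and the ``short finite check per type'' does not get off the ground. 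The standard repair is exactly the rank-function route you set aside as circular --- it is not circular if ordered correctly: first prove (i) each $\ct$-move is Bruhat-increasing and raises $\tfrac{1}{2}(\inv+\exc)$ by exactly $1$, and (ii) your sublemma (B); then (ii) joins any $\si<\mu$ in $\I$ by a chain of $\ct$-moves, whence $\rho(\si)<\rho(\mu)$ by (i), and coveredness, the covering characterisation, and the gradedness of $\I$ (Lemma~\ref{rank}) all follow together. Your outline of (B), including the observation that $\di(\si,\tau)$ cannot be a deficiency of $\si$ because $\si$ and $\tau$ are involutions agreeing below it, is on the right track but remains to be carried out in full.
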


\begin{Lemma}[\mbox{\cite[\Th~5.2]{Incitti}}] \label{rank}
The poset $\I$ is graded with rank function $\rho$ given by
\[
\rho(\si)=\frac{\inv(\si)+\exc(\si)}{2}.
\]
\end{Lemma}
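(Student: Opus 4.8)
The plan is to take the claimed function $\rho(\si)=\tfrac12\bigl(\inv(\si)+\exc(\si)\bigr)$ as a candidate rank function and verify it directly from Incitti's description of the covering relation. First I would record that $\I$ is bounded: its minimum is the identity $e$ (the minimum of $S_n$), and its maximum is $w_0$, which is an involution lying above everything. Since $\rho(e)=0$, gradedness and the formula for $\rho$ both follow once we know that
\[
\inv(\tau)+\exc(\tau)=\inv(\si)+\exc(\si)+2
\]
for every cover $\si\cov\tau$ in $\I$. Indeed, any maximal chain of $\I$ then runs from $e$ to $w_0$ and $\rho$ increases by exactly $1$ at each step, so all maximal chains have length $\rho(w_0)$; hence $\I$ is graded of that rank with rank function $\rho$. (The displayed identity together with $\inv(e)+\exc(e)=0$ also shows that $\inv+\exc$ is always even, so $\rho$ is integer-valued; this is also clear from $\mathrm{sgn}(\si)=(-1)^{\inv(\si)}=(-1)^{\exc(\si)}$, an involution being a product of $\exc(\si)$ disjoint transpositions.)

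To prove the displayed identity I would apply Lemma~\ref{cover}: a cover is $\tau=\ct_{(i,j)}(\si)$ for a suitable rise $(i,j)$, of one of the six types. By Definition~\ref{ct}, $\si$ and $\tau$ then agree outside the set $P=\{i,j,\si(i),\si(j)\}$ of \emph{active} positions (which has size $2$, $3$, or $4$ depending on the type), and the values of $\tau$ on $P$ are given explicitly there. For each type I would read off $\exc(\tau)-\exc(\si)$ from the change of cycle type on $P$: it is $+1$ for Type~1 (two fixed points merge into a $2$-cycle), $-1$ for Type~5 (a crossing $ee$-pair loses a $2$-cycle), and $0$ for Types~2, 3, 4, and~6. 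Then I would compute $\inv(\tau)-\inv(\si)$ as a sum of two pieces: the change among pairs with both positions in $P$ — a small explicit count, which comes out to $1,2,2,2,3,2$ for Types~1--6 — and the change among the \emph{mixed} pairs, those with exactly one position in $P$. Granting that the mixed piece vanishes, one gets $\inv(\tau)-\inv(\si)=1,2,2,2,3,2$, so in every type the total $\bigl(\inv(\tau)+\exc(\tau)\bigr)-\bigl(\inv(\si)+\exc(\si)\bigr)$ equals $2$, as needed.

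The crux — and the only genuinely technical point — is the vanishing of the mixed piece: for each position $k\notin P$, the inversions that $k$ forms with the active positions must have the same total in $\si$ as in $\tau$. This is precisely where the hypothesis that $(i,j)$ is a \emph{free} rise enters: freeness forbids $\si(k)\in(\si(i),\si(j))$ for $k$ strictly between $i$ and $j$, and, since $\si$ is an involution, the partner of any value lying in the relevant window is then forced onto the correct side; together these make the contributions of the (two to four) active positions cancel in pairs for each fixed $k$. I expect this to be a short but slightly fiddly verification to be carried out once in each of the six types, the six cases being largely parallel; everything else in the argument is formal.
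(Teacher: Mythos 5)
The paper offers no proof of this lemma --- it is quoted verbatim from Incitti \cite[Th.~5.2]{Incitti} --- so the only meaningful comparison is with Incitti's original argument, and your proposal is essentially a correct reconstruction of it. The logical structure is sound: since $\I$ is bounded by $e$ and $w_0$, every maximal chain is saturated and runs from $\hat0$ to $\hat1$, so it suffices to show that $\inv+\exc$ increases by exactly $2$ across each cover, and by Lemma~\ref{cover} this reduces to a finite check over the six types of $\ct_{(i,j)}$. Your bookkeeping is right: the exceedance changes are $+1,0,0,0,-1,0$ and the inversion changes among pairs inside the active set $P$ are $1,2,2,2,3,2$ for Types~1--6, summing to $2$ in every case (I spot-checked several types against Definition~\ref{ct}). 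You also correctly identify the one nontrivial point, the vanishing of the mixed contribution, and the mechanism you name is the right one: for $k\notin P$ the discrepancy is nonzero only when $\si(k)$ falls in a window of the form $(\si(i),\si(j))$ (or a subwindow thereof) while $k$ lies between the relevant active positions, and this is excluded either directly by freeness of $(i,j)$ or by applying freeness to the partner $\si(k)$, using that $\si$ is an involution stabilising $P$. The only caveat is that you state rather than carry out this verification in each of the six types; it does go through (e.g.\ in Type~5, a $k\in(j,\si(j))$ with $\si(k)\in(i,j)$ would make $\si(k)$ itself a witness against freeness), but a complete write-up would need those half-dozen parallel case analyses spelled out. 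As a minor remark, your parity observation via $\mathrm{sgn}(\si)=(-1)^{\inv(\si)}=(-1)^{\exc(\si)}$ is correct and is not even strictly needed once the cover identity and $\inv(e)+\exc(e)=0$ are in hand.
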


\begin{Lemma}[\mbox{\cite[\Th~6.2]{Incitti}}] \label{incr}
Let $\si<\tau$ in $\I$. Then there is exactly one increasing $\si$-$\tau$-chain, and it is lex-minimal.
\end{Lemma}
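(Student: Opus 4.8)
The plan is to prove both statements at once by induction on $\ell=\rho(\tau)-\rho(\si)$, the common length of all $\si$-$\tau$-chains (well defined by Lemma~\ref{rank}). If $\ell=1$, the only $\si$-$\tau$-chain is the cover $\si\cov\tau$, which is vacuously increasing and, being the only one, lex-minimal. So assume $\ell\geq2$ and put $d=\di(\si,\tau)$. Since $\si$ and $\tau$ agree on $[1,d-1]$ and $\si\leq\tau$, Definition~\ref{order} forces $\si(d)<\tau(d)$; moreover $\si(d)\geq d$, because if $\si(d)=p<d$ then, $\si$ being an involution, $\si(p)=d$, hence $\tau(p)=\si(p)=d$, and then the involution $\tau$ would satisfy $\tau(d)=p=\si(d)$, contradicting $\si(d)<\tau(d)$. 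Thus $d$ is a fixed point or an exceedance of $\si$.

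The first substantive step is a local analysis of the covers of $\si$ that lie below $\tau$. I would prove: (a) if $\si\cov\si'$ with $\si'\leq\tau$, then the label $\la(\si,\si')=(i,j)$ satisfies $i\geq d$; and (b) there is at least one such cover with $i=d$. For (a), a type-by-type inspection of Definition~\ref{ct} shows that for a free rise $(i,j)$ the involution $\ct_{(i,j)}(\si)$ has the value $\si(j)$ in position $i$, with $\si(j)>\si(i)$, while $\ct_{(i,j)}$ alters only entries in positions $\geq i$; hence if $i<d$ then $\si'$ agrees with $\tau$ on $[1,i-1]$ but $\si'(i)>\si(i)=\tau(i)$, contradicting $\si'\leq\tau$ via Definition~\ref{order}. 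For (b) one exhibits a suitable rise at position $d$ whose $\ct$-move lands below $\tau$, using $\si(d)<\tau(d)$ and $\si(d)\geq d$; identifying the correct second coordinate requires examining the configuration of $\si$ and $\tau$ near position $d$ and is the delicate point. Granting (a) and (b), let $(d,j_1)$ be the lexicographically least label among covers of $\si$ below $\tau$, and let $\si_1=\ct_{(d,j_1)}(\si)$ be the corresponding cover (Lemma~\ref{cover}).

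Next I would deduce that the lex-minimal $\si$-$\tau$-chain is increasing and is the only increasing $\si$-$\tau$-chain. By the inductive hypothesis, the lex-minimal $\si_1$-$\tau$-chain is increasing; since $\ct_{(d,j_1)}$ fixes positions $1,\dots,d-1$, we have $\di(\si_1,\tau)\geq d$, and a further case analysis --- tracking how $\ct_{(d,j_1)}$ moves the entry in position $d$ toward $\tau(d)$, and how a subsequent cover of $\si_1$ below $\tau$ at position $d$ must act --- shows that the least label of a cover of $\si_1$ below $\tau$ is $\geq(d,j_1)$; hence the lex-minimal $\si$-$\tau$-chain has nondecreasing labels. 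For uniqueness, let $C\colon\si=\si_0\cov\cdots\cov\si_\ell=\tau$ be any increasing chain, with first label $(i,j)$; by (a), $(i,j)\geq(d,j_1)$. If $(i,j)>(d,j_1)$, then every label of $C$ is $\geq(i,j)$: when $i>d$ no move of $C$ ever touches position $d$, contradicting $\si(d)\neq\tau(d)$; and when $i=d$ (so $j>j_1$) the same position-$d$ bookkeeping shows that a chain through $\ct_{(d,j)}(\si)$ all of whose later labels are $\geq(d,j)$ can never bring position $d$ up to $\tau(d)$ without using a label lexicographically smaller than $(d,j)$, again a contradiction. Hence $(i,j)=(d,j_1)$, so the first element of $C$ is $\ct_{(d,j_1)}(\si)=\si_1$, and by induction $C$ continues along the unique increasing $\si_1$-$\tau$-chain, which is lex-minimal; prepending the least-labelled cover $\si\cov\si_1$ shows $C$ is the lex-minimal $\si$-$\tau$-chain.

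I expect the main obstacle to be exactly the type-by-type bookkeeping behind (a), (b) and the two position-$d$ monotonicity claims: one must control where $\ct_{(i,j)}$ sends the entries in positions $i$, $j$, $\si(i)$ and $\si(j)$ in each of the six cases of Definition~\ref{ct} --- the information encoded in Incitti's Table~1 --- and then show that once the lex-minimal chain has begun resolving position $d$, its labels cannot decrease and no competing increasing continuation can arise. Once these local facts are established, the inductive skeleton above is routine.
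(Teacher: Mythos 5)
This statement is not proved in the paper at all: it is imported verbatim from Incitti \cite[\Th~6.2]{Incitti}, so there is no in-paper argument to compare yours against. Judged on its own, your proposal reproduces the correct high-level skeleton of Incitti's argument (build the chain greedily from the lexicographically least cover below $\tau$, which necessarily occurs at position $d=\di(\si,\tau)$, and induct), and your preliminary observations are sound: $\si(d)<\tau(d)$ follows from \Def~\ref{order}, $\si(d)\geq d$ follows from the involution property, and your step (a) is exactly Lemma~\ref{ineq} of the paper, proved the same way.

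However, there is a genuine gap: every load-bearing step is asserted rather than proved, and you say so yourself (``is the delicate point,'' ``a further case analysis \dots shows,'' ``the same position-$d$ bookkeeping shows''). Concretely, three things are missing. First, step (b): you must exhibit a suitable rise $(d,j)$ of $\si$ with $\ct_{(d,j)}(\si)\leq\tau$; this requires choosing $j$ from the configuration of $\si$ and $\tau$ near $d$ and verifying both suitability (freeness plus the correct type) and the Bruhat comparison, which is a nontrivial case analysis over the six types. Second, the claim that the least label of a cover of $\si_1$ below $\tau$ is $\geq(d,j_1)$ is what makes the greedy chain increasing at the junction, and it is not a formal consequence of anything you have established --- when $\di(\si_1,\tau)=d$ one must rule out a suitable rise $(d,j')$ of $\si_1$ with $j'<j_1$ landing below $\tau$, which again depends on how each $\ct$-move rearranges the entries at $d$, $j$, $\si(d)$, $\si(j)$. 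Third, in the uniqueness step, the assertion that an increasing chain starting with $(d,j)$, $j>j_1$, ``can never bring position $d$ up to $\tau(d)$'' is precisely the exchange-type property at the heart of Incitti's proof and needs its own argument. These three items are essentially the entire content of the theorem (Incitti's Table~1 analysis and the lemmas of his Sections~5--6); without them the proposal is an outline, not a proof.
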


\begin{Lemma}[\mbox{\cite[\Th~7.3]{Incitti}}] \label{decr}
Let $\si<\tau$ in $\I$. Then there is exactly one decreasing $\si$-$\tau$-chain.
\end{Lemma}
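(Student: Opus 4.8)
I would argue by induction on $\ell=\rho(\tau)-\rho(\si)$, which by Lemma~\ref{rank} is the length of every saturated $\si$-$\tau$-chain. For $\ell=0$ the empty chain is the unique (vacuously) decreasing chain, so assume $\ell\ge 1$. Deleting the top edge of a decreasing $\si$-$\tau$-chain leaves a decreasing chain, and by the inductive hypothesis a decreasing chain in any shorter interval is unique; hence it suffices to identify one element $\eta$ with $\eta\cov\tau$ and $\si\le\eta$ such that (a) appending $\eta\cov\tau$ to the decreasing $\si$-$\eta$-chain keeps it decreasing (this yields existence), and (b) every decreasing $\si$-$\tau$-chain has $\eta$ as its penultimate element (this, with the inductive hypothesis, yields uniqueness).

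Put $d=\di(\si,\tau)$. From Definition~\ref{order} one extracts two facts. First, $\si(d)<\tau(d)$: otherwise $\si[d,\si(d)]=\tau[d,\si(d)]+1$, contradicting $\si\le\tau$. Second, every $x$ with $\si\le x\le\tau$ agrees with $\si$ in positions $1,\p,d-1$, since there $x[i,l]$ is squeezed between $\si[i,l]$ and $\tau[i,l]=\si[i,l]$, which pins down the relevant prefix of $x$. Now inspect the six types of Definition~\ref{ct}: a cover $\nu\cov\nu'$ with label $(i,j)$ has $\di(\nu,\nu')=i$, and when $i=d$ the value in position $d$ strictly increases, whereas when $i>d$ position $d$ is unchanged. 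Combining with the second fact, \emph{no} label on a saturated $\si$-$\tau$-chain has first coordinate $<d$; and along a \emph{decreasing} chain the labels $(d,\cdot)$, being lexicographically below every label with first coordinate $>d$, form a terminal block with strictly decreasing second coordinates. Since the value in position $d$ runs from $\si(d)$ to $\tau(d)>\si(d)$, strictly increasing at each cover of this block, the block is nonempty.

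The crux is that this terminal block is a \emph{single} cover. Granting it, the penultimate element $\eta$ of any decreasing $\si$-$\tau$-chain has not yet incremented position $d$, so $\eta$ agrees with $\si$ throughout positions $1,\p,d$; with $\eta\ne\tau$ this gives $\di(\eta,\tau)=d$, hence $\la(\eta,\tau)=(d,j)$ for some $j$. Conversely one shows there is a unique element $\eta$ with $\eta\cov\tau$, $\si\le\eta$, and $\eta$ agreeing with $\si$ through position $d$, and that $\di(\si,\eta)>d$; this settles (b). For (a), $\di(\si,\eta)>d$ forces, by the prefix fact applied inside $[\si,\eta]$, every label of the decreasing $\si$-$\eta$-chain to have first coordinate $>d$, hence to exceed $(d,j)=\la(\eta,\tau)$, so appending $\eta\cov\tau$ preserves decreasingness. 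The induction then closes.

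The genuine obstacle is the single-cover claim — equivalently, that no decreasing chain increments position $d$ more than once — together with the existence and uniqueness of the element $\eta$ just described. Establishing these means matching the explicit formulas of Definition~\ref{ct} against the inequalities of Definition~\ref{order}, case by case over the six rise types and over the ways the pairs involved can overlap (disjoint, nested, crossing, sharing an endpoint); this is the technical heart of Incitti's treatment~\cite{Incitti}, and one could instead invoke his ``lifting'' lemmas directly. Everything else above is routine.
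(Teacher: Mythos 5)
There is nothing in the paper to compare your argument with: Lemma~\ref{decr} is imported verbatim from Incitti \cite[\Th~7.3]{Incitti} and is not reproved here. Judged on its own, your plan contains a genuine gap, and the claim you isolate as ``the crux'' is in fact false. Take $\si=1234<\tau=4321$ in $\Io_4$, so $d=\di(\si,\tau)=1$. The unique decreasing $\si$-$\tau$-chain is $1234 \cov 1243 \cov 1432 \cov 3412 \cov 4321$, with labels $(3,4),(2,3),(1,3),(1,2)$ (this chain is visible as the left side of Figure~\ref{big}). Here the terminal block of labels with first coordinate $d=1$ consists of \emph{two} covers, and the value in position $1$ is incremented twice, from $1$ to $3$ to $4$. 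Consequently the penultimate element $\eta=3412$ does not agree with $\si$ through position $d$, and $\di(\si,\eta)=1=d$; so the characterisation of $\eta$ on which your step (b) rests fails, and the inequality $\di(\si,\eta)>d$ that drives your step (a) fails with it. The induction cannot close as written. (That the terminal block may be long is not an anomaly: the proof of Lemma~\ref{Fs} in this paper explicitly manipulates the top $m\ge 1$ covers of $C_D$, all carrying labels $(h,j_1),\ldots,(h,j_m)$ with the same first coordinate.)

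The scaffolding around the false claim is sound: the induction on rank, the facts that $\si(d)<\tau(d)$, that every element of $[\si,\tau]$ agrees with $\si$ on positions $1,\ldots,d-1$ (essentially Lemma~\ref{ineq}), and that the labels $(d,\cdot)$ form a nonempty terminal block of any decreasing chain are all correct and useful. A top-down induction could in principle still work, but it would require correctly identifying and justifying the forced penultimate element, which is precisely the content you defer to ``matching the formulas of Definition~\ref{ct} against Definition~\ref{order} case by case'' or to ``Incitti's lifting lemmas'' --- and the latter is circular, since the statement being proved \emph{is} Incitti's Theorem~7.3. As it stands, the proposal reduces the lemma to a false intermediate statement plus an appeal to the very source being cited.
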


\begin{Remark}
Since $\ct_{(i,j)}(\si)(i)>\ct_{(i,j)}(\si)(j)$, there is also exactly one ``weakly'' decreasing $\si$-$\tau$-chain. This fact is used in Section~\ref{EL}.
\end{Remark}

\section{Proof of the main result} \label{proof}

In this section, we prove a number of lemmas and propositions, from which \Th~\ref{MT} easily follows.

The strategy for proving that a poset $\FA$ is graded is as follows. Given $x<y$ in $\FA$ such that $x \ncov y$ in $\I$, we consider the increasing and the decreasing $\si$-$\tau$-chains in $\I$. We then prove that either the element in the increasing chain that covers $\si$, or the element in the decreasing chain that is covered by $\tau$, has to belong to $\FA$. By Lemma~\ref{graded}, this implies that $\FA$ is graded.

For $\Fm$, this is done by assuming the opposite to be true, and then using \Def~\ref{order} to obtain a contradiction (see Lemma~\ref{Fm}). For $\Fs$, more work is needed (see Lemma~\ref{Fs}). The proof for $\Fi$ is largely a combination of the proofs for $\Fm$ and $\Fs$ (see Lemma~\ref{Fi}).

To prove that a poset $\FA$ is not graded, we consider an interval $[\si,\tau]$, and then construct two $\si$-$\tau$-chains in $\FA$ of different lengths (see \Pros~\ref{-o-} and \ref{o-o}).

Let us first note the following fact:

\begin{Lemma} \label{bij}
For all $n$ and all $A$, $\FA$ is graded if and only if $\Fn$ is graded.
\end{Lemma}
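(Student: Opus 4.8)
The idea is that passing from $A$ to $A-\{n\}$ changes $\FA$ by at most one element---the identity permutation, adjoined as a new minimum---and that adjoining a minimum to a poset never affects gradedness. First I would dispose of the trivial case: if $n \notin A$, then $A-\{n\}=A$, so $\Fn=\FA$ and there is nothing to prove. So assume $n \in A$. The identity $\id$ is the unique involution with $n$ fixed points, so $\F^n=\{\id\}$; since $n \notin A-\{n\}$, we have $\id \notin \Fn$, and hence $\FA=\Fn \cup \{\id\}$ as sets. Because $\id$ is the minimum of $S_n$ in the Bruhat order, it is the minimum $\hat0$ of $\FA$, and it lies strictly below every element of $\Fn$.

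Next I would match up maximal chains. Since the order on $\FA$ is induced from $S_n$, deleting $\hat0$ from $\FA$ recovers exactly the poset $\Fn$. One checks that $\hat0 \cov \si$ in $\FA$ if and only if $\si$ is a minimal element of $\Fn$: if $\si$ is minimal in $\Fn$, then no element of $\FA$ lies strictly between $\hat0$ and $\si$; conversely, if $\hat0 \cov \si$ in $\FA$ then $\si \in \Fn$, and a strictly smaller element of $\Fn$ would contradict $\hat0 \cov \si$. Consequently the map $(\hat0 \cov x_1 \cov \cd \cov x_k) \mapsto (x_1 \cov \cd \cov x_k)$ is a bijection from the maximal chains of $\FA$ onto the maximal chains of $\Fn$, decreasing every length by exactly $1$. (If $A-\{n\}=\0$, then $\Fn=\0$ and $\FA=\{\hat0\}$, and the equivalence holds vacuously.)

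It follows at once that all maximal chains of $\FA$ have a common length if and only if all maximal chains of $\Fn$ do; that is, $\FA$ is graded if and only if $\Fn$ is graded, and in the graded case the rank function of $\FA$ is that of $\Fn$ increased by $1$---which is the source of the ``$+1$ if $n \in A$'' term in \Th~\ref{MT}. I do not expect a genuine obstacle here: this is a routine ``adjoin a $\hat0$'' argument, and the only points requiring a little care are the identification of $\id$ as the global Bruhat minimum, the fact that the order induced on $\Fn \seq \FA$ is the order already carried by $\Fn$, and the two degenerate cases noted above.
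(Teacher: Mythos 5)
Your proposal is correct and is essentially the paper's own argument: the paper likewise dismisses the case $n \notin A$ as trivial and observes that deleting the identity permutation gives a length-decreasing bijection between maximal chains of $\FA$ and of $\Fn$. You have merely spelled out the details (that $\id$ is the Bruhat minimum, hence an adjoined $\hat0$, and the degenerate case $A-\{n\}=\0$) that the paper leaves implicit.
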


\begin{proof}
This is obvious if $n \notin A$. Otherwise, deleting the identity permutation gives a bijection between maximal chains in $\FA$ of length $k$ and maximal chains in $\Fn$ of length $k-1$.
\end{proof}

In the next two results, we describe the maximal and minimal elements of $\FA$.

\begin{Prop} \label{top}
For all $n$ and all $A$, $\FA$ has a $\hat1$. Furthermore, $\inv(\hat1)=\frac{n-a}{2}(n+a-1)$ and $\exc(\hat1)=\frac{n-a}{2}$, where $a=\min{A}$.
\end{Prop}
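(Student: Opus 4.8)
The plan is to exhibit an explicit candidate for $\hat1$ and then verify both that it lies in $\FA$ and that it dominates every element of $\FA$ in the Bruhat order, using the criterion of \Def~\ref{order}. Since we may assume all members of $A$ have the parity of $n$, write $a=\min A$ and $n-a=2m$ with $m \geq 0$. The natural candidate is the involution $w$ with exactly $a$ fixed points that is ``as large as possible'': I would take $w$ to fix the $a$ middle points and to pair up the remaining $2m$ points in the maximally-crossing way. Concretely, order the non-fixed positions as $1,2,\p,m,n-m+1,\p,n$ (the $m$ smallest and the $m$ largest), fix the block $\{m+1,\p,n-m\}$ (which has $a$ elements, since $n-2m=a$), and set $w(i)=n+1-i$ for $i$ in the non-fixed set. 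This $w$ is an involution with exactly $a$ fixed points, so $w \in \F^a \seq \FA$.

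Next I would compute $\inv(w)$ and $\exc(w)$ for this $w$, which should match the claimed formulas $\frac{n-a}{2}(n+a-1)$ and $\frac{n-a}{2}$; the exceedance count is immediate ($w(i)>i$ exactly for the $m$ positions $i \leq m$, giving $\exc(w)=m=\frac{n-a}{2}$), and the inversion count is a short direct count of pairs among the reversed block together with pairs involving one non-fixed and one fixed coordinate. The main work is the domination claim: for every $\si \in \FA$ and every $(k,l) \in [n]^2$ we must show $\si[k,l] \leq w[k,l]$. Here $\si[k,l]=|\{i \leq k \mid \si(i) \geq l\}|$. I would argue that $w[k,l]$ is in fact the maximum of $\si[k,l]$ over all $\si$ in the larger set $\F^{\leq a}=\bigcup_{i\geq 0}\F^{a-2i}$ — indeed over all involutions with at most $a$ fixed points — by observing that any involution $\si$ with $f$ fixed points satisfies $\exc(\si)=\frac{n-f}{2}\leq \frac{n-a}{2}$ when $f \geq a$... wait, that inequality goes the wrong way, so instead I would bound $\si[k,l]$ directly: among the $i \leq k$, at most $k$ have $\si(i)\geq l$, and also at most $n-l+1$ values are $\geq l$, so $\si[k,l]\leq\min(k,n-l+1)$; then I would check that $w$ attains this bound $\min(k,n-l+1)$ for every $(k,l)$ with $k \geq l$ (where the constraint is active), which is a direct verification from the two-block description of $w$, and that for $k<l$ the relevant count is forced to be small in a way $w$ still maximizes. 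Since $w[k,l]=\min(k,n+1-l)$ whenever $k+l \leq n+1$ and this is a universal upper bound for every permutation, $w$ is the top element.

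The step I expect to be the main obstacle is making the verification ``$w[k,l]=\min(k,n+1-l)$ for $k+l\le n+1$, and $w$ maximizes $\si[k,l]$ in the remaining range $k+l>n+1$'' clean rather than a case-ridden computation; the subtlety is that when $k+l>n+1$ the trivial bound $\min(k,n+1-l)$ is not tight, and one must use that $\si$ is an involution with at least $a$ fixed points to see that $\si[k,l]$ cannot exceed $w[k,l]$ there. I would handle this by noting that for an involution, $\si[k,l]$ equals the number of exceedance-type pairs ``crossing'' the box $[1,k]\times[l,n]$, and that forcing at least $a$ fixed points caps how many such crossings are possible; then a short symmetry/counting argument, using that $w$ places all its $2m$ moved points at the extreme ends, shows $w$ realizes that cap simultaneously for all $(k,l)$. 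All of this is elementary once the candidate $w$ is fixed, so the proposition reduces to a careful but routine check.
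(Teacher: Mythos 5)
Your candidate for $\hat1$ is exactly the paper's: fix the middle block of size $a$ and send $i \mapsto n+1-i$ on the outer $n-a$ positions, then verify maximality via \Def~\ref{order}; the exceedance count and the (omitted but routine) inversion count are fine. The problem is in the domination argument. Your proposed dichotomy --- that $w[k,l]$ equals the universal bound $\min(k,n+1-l)$ whenever $k+l\le n+1$ (or, in your other formulation, whenever $k\ge l$), so that the fixed-point constraint is only needed in the remaining range --- is false. Take $n=10$, $a=6$, so $w=10\,9\,3\,4\,5\,6\,7\,8\,2\,1$ in one-line notation; for $(k,l)=(5,4)$ one has $k+l=9\le 11$ and $k\ge l$, yet $w[5,4]=4<5=\min(k,n+1-l)$. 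In general the universal bound is attained only for $k\le\frac{n-a}{2}$ or $k\ge\frac{n+a}{2}$ (and for extreme $l$); for $k$ strictly between these values and $l$ in the middle range, the hypothesis that $\si$ has at least $a$ fixed points is indispensable, independently of the sign of $k+l-(n+1)$. So the case split you propose does not correctly isolate the hard region.

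Moreover, that hard region is where all the content of the proposition lives, and your treatment of it is only a gesture (``a short symmetry/counting argument''). The paper's proof consists precisely of this count: writing $\al=\frac{n-a}{2}$ and $\be=\frac{n+a}{2}$, it shows that $\si[k,l]>w[k,l]$ forces $k\in[\al+1,\be-1]$ and then either $|\si([k])\cap[\al+2,n]|=k$ or $|\si([k])\cap[k+1,n]|\ge\al+1$; in either case $\si$ has at most $k-\al-1$ fixed points in $[k]$, hence at least $\be-k+1$ fixed points in $[k+1,n]$, which bounds $|\si([k])\cap[k+1,n]|$ by $\al-1$ and contradicts both alternatives. (Note also that mid-proposal you compare against $\Fm$, i.e.\ involutions with at most $a$ fixed points; since $a=\min A$, the relevant superset of $\FA$ is $\F^{\geq a}$, as your corrected phrasing ``at least $a$ fixed points'' has it.) Until that counting step is actually carried out, the proof is incomplete.
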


\begin{proof}
Let
\[
\tau=n(n-1) \cd (\be+1)(\al+1)(\al+2) \cd \be\al(\al-1) \cd 1
\]
(one line notation), where $\al=\frac{n-a}{2}$ and $\be=\frac{n+a}{2}$; note that $\al+\be=n$. We prove that $\tau$ is a $\hat1$.

In order to obtain a contradiction, assume that there are $\si \in \FA$ and $(k,l) \in [n]^2$ such that $\si[k,l]>\tau[k,l]$; note that $k \in [\al+1,\be-1]$. Then either
\begin{equation} \label{eq1}
|\si([k]) \cap [\al+2,n]|=k
\end{equation}
or
\begin{equation} \label{eq2}
|\si([k]) \cap [k+1,n]| \geq \al+1.
\end{equation}
In either case, $\si$ has at most $k-\al-1$ fixed points in $[k]$, and hence at least $(\be-\al)-(k-\al-1)=\be-k+1$ fixed points in $[k+1,n]$. Thus,
\begin{align*}
|\si([k]) \cap [k+1,n]| &\leq (n-(k+1)+1)-(\be-k+1) \\
                        &= n-\be-1=\al-1,
\end{align*}
which contradicts \eqref{eq2}. Furthermore,
\begin{align*}
|\si([k]) \cap [\al+2,n]| &= |\si([k]) \cap [\al+2,k]|+|\si([k]) \cap [k+1,n]| \\
                          &\leq (k-(\al+2)+1)+(\al-1)=k-2,
\end{align*}
which contradicts \eqref{eq1}. Thus $\tau$ is a $\hat1$.

For the second part, we count the number of inversions $(i,j)$ such that $i \leq \al$, $\al<i \leq \be$, and $i>\be$, respectively, as follows:
\begin{align*}
\inv(\hat1) &= [(n-1)+(n-2)+\cd+(n-\al)]+a\al+\tbinom{\al}{2} \\
            &= \al \cdot \tfrac{2n-\al-1}{2}+a\al+\al \cdot \tfrac{\al-1}{2} \\
            &= \al(n+a-1)=\tfrac{n-a}{2}(n+a-1).
\end{align*}
Clearly, $\exc(\hat1)=\frac{n-a}{2}$.
\end{proof}

\begin{Prop} \label{bot}
For all $n$ and all $A$, all minimal elements of $\FA$ have rank $(n-\max{A})/2$ in $\I$.
\end{Prop}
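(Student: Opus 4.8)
The plan is to put $b=\max A$ and $m=(n-b)/2$ and to exploit that $\F^b\seq\FA$. For $\si\in\F^b$ we have $\exc(\si)=m$, so by Lemma~\ref{rank}, $\rho(\si)=m$ if and only if $\inv(\si)=m$, and the latter holds precisely when every $2$-cycle of $\si$ has the form $(p,p+1)$: distinct adjacent transpositions of an involution are disjoint and contribute exactly one inversion each and none with one another or with a fixed point, while a $2$-cycle $(i,j)$ of $\si$ with $j\geq i+2$ forces an extra inversion, since no $p\in(i,j)$ can satisfy both $\si(p)>j$ and $\si(p)<i$. It therefore suffices to prove \textbf{(A)} every minimal element of $\FA$ lies in $\F^b$, and \textbf{(B)} if $\si\in\F^b$ has a $2$-cycle $(i,j)$ with $j\geq i+2$, then $\si$ is not minimal in $\FA$. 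Granting these, a minimal $\si\in\FA$ lies in $\F^b$ with all $2$-cycles adjacent, so $\inv(\si)=\exc(\si)=m$ and hence $\rho(\si)=m=(n-\max A)/2$.

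For \textbf{(A)}, let $\si$ be minimal in $\FA$ and choose a saturated chain $\id=\tau_0\cov\tau_1\cov\cd\cov\tau_k=\si$ in $\I$, which exists since $\I$ is graded (Lemma~\ref{rank}) with minimum $\id$. Examining \Def~\ref{ct}, a covering step of Type~$1$ decreases the number of fixed points by $2$, whereas a step of Types~$2$--$6$ leaves it unchanged; thus the number of fixed points of $\tau_l$ descends from $n$ to that of $\si$ in steps of $0$ and $2$, and so attains every intermediate value of the correct parity. As $\si$ has $a\leq b$ fixed points for some $a\in A$, some $\tau_l$ has exactly $b$ fixed points, so $\tau_l\in\F^b\seq\FA$ and $\tau_l\leq\si$; minimality of $\si$ forces $\tau_l=\si$, so $\si\in\F^b$.

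For \textbf{(B)}, let $\si\in\F^b$ have a $2$-cycle $(i,j)$ with $j\geq i+2$ and look at the position $i+1\in(i,j)$. If $i+1$ is a fixed point of $\si$, put $\si'=(i+1,j)\,\si\,(i+1,j)$, which agrees with $\si$ except that its $2$-cycle $(i,j)$ and fixed point $i+1$ are replaced by the $2$-cycle $(i,i+1)$ and fixed point $j$. Otherwise $i+1$ lies in a $2$-cycle $(i+1,l)$ with $l\notin\{i,i+1,j\}$, and we obtain $\si'$ from $\si$ by replacing the two $2$-cycles $(i,j)$ and $(i+1,l)$ by $(i,i+1)$ and $(j,l)$ when $l>j$, by $(i,i+1)$ and $(l,j)$ when $i+1<l<j$, and by $(l,i)$ and $(i+1,j)$ when $l<i$. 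In every case $\si'$ has the same cycle type as $\si$, hence $\si'\in\F^b\seq\FA$, and one checks that $\si'<\si$, so $\si$ is not minimal.

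The inequality $\si'<\si$ is verified via \Def~\ref{order}. In each case $\si$ and $\si'$ agree off the set $T$ of three or four positions whose cycles were altered, and both permute $T$; so $\si'\leq\si$ reduces to checking, for every initial segment of $T$ (taken in increasing order), that the values $\si'$ places on that segment, sorted decreasingly, are entrywise $\leq$ the values $\si$ places there, sorted decreasingly. In all four configurations this is immediate from the explicit values, and $\si'\neq\si$, so $\si'<\si$. This fourfold coordinate check is the only real work; everything else is parity bookkeeping on the number of fixed points along a saturated chain.
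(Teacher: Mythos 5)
Your proposal reaches the right conclusion and its overall reduction (minimal elements lie in $\F^b$, $b=\max A$, and have only adjacent $2$-cycles, whence $\inv=\exc=(n-b)/2$ and Lemma~\ref{rank} applies) matches the paper's, but there is one factual error you must repair. You claim that a covering step of Types $2$--$6$ leaves the number of fixed points unchanged. This is false for Type~$5$: a crossing $ee$-rise turns two $2$-cycles into one $2$-cycle and two fixed points, so it \emph{increases} the fixed-point count by $2$ --- this is exactly the fact the paper exploits in Lemmas~\ref{Fm}--\ref{Fi}, where $\si_1\notin\Fm$ forces the cover $\si\cov\si_1$ to be of Type~$5$. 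Hence the fixed-point count along your saturated chain from $\id$ to $\si$ need not descend monotonically. Your step \textbf{(A)} survives anyway, because the count still changes by $0$ or $\pm2$ at each step, and a walk with such steps from $n$ down to $a\leq b$ must pass through every value of the correct parity in $[a,n]$, in particular through $b$; but you must replace ``descends in steps of $0$ and $2$'' by this discrete intermediate-value observation.

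Beyond that, your execution genuinely differs from the paper's. The paper proves non-minimality of a non-canonical $\tau$ by applying Incitti's inverse covering transformation $\ict$ to a carefully chosen inversion, producing an element \emph{covered} by $\tau$ inside $\FA$, with the case split governed by whether some point of $(i,j)$ is mapped into $(i,j)$. You instead construct an explicit smaller element $\si'$ of the \emph{same cycle type} (not necessarily a cover) and verify $\si'<\si$ directly from Definition~\ref{order} via the dominance check on the three or four displaced positions; this is more elementary and avoids Lemma~\ref{cover} entirely, at the price of a fourfold coordinate check. Your step \textbf{(A)} also handles a case the paper's contrapositive passes over: an involution all of whose $2$-cycles are adjacent but with fewer than $\max A$ fixed points (e.g.\ $2143\in \Fo_4^{\{0,2\}}$) has no $i$ with $\tau(i)\geq i+2$, so the paper's opening assertion in that proof does not apply to it, whereas your chain argument disposes of it cleanly. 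With the Type~$5$ correction made, your proof is complete.
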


\begin{proof}
Put $a=\max{A}$. We prove that all minimal elements of $\FA$ have disjoint cycle decompositions consisting of $a$ fixed points and $\frac{n-a}{2}$ adjacent transpositions. By Lemma~\ref{rank}, all such involutions have rank $\frac{n-a}{2}$.

We prove the contrapositive. Thus, let $\tau$ be another involution in $\FA$. Then there is an $i \in [n]$ such that $\tau(i)=j \geq i+2$.

\textbf{Case~1.} For some $i$ there is a $k \in (i,j)$ such that $\tau(k) \in (i,j)$. Choose such an $i$, and let $k$ be minimal. Note that $k$ is either a fixed point or an exceedance. In either case, let $\si=\ict_{i,k}(\tau)$; note that $(i,k)$ is a free rise of $\si$ by the minimality of $k$. Then $\tau \voc \si \in \FA$, whence $\tau$ is not minimal.

\textbf{Case~2.} For no $i$ is there such a $k$. Let $i$ be minimal, and let $\tau(l)=i+1$. Then $l \geq j+1$. Let $\si=\ict_{i,l}(\tau)$; note that $(i,l)$ is a free rise of $\si$ by the assumption of Case~2. Then $\tau \voc \si \in \FA$, whence $\tau$ is not minimal.
\end{proof}

Recall that
\[
\Fm=\bigcup_{i \geq 0}\F^{a-2i}, \quad \Fs=\bigcup_{i \geq 0}\F^{a+2i}, \quad \h{and} \quad \Fi=\F^{\geq a_1} \cap \F^{\leq a_2},
\]
where $a_2=a_1+2m$ for some positive integer $m$. Note that $\Fi$ is not defined for $a_1=a_2$.

The following lemma will eventually allow us to conclude that $\Fm$, $\Fs$, and $\Fi$ are graded.

\begin{Lemma} \label{graded}
If there are no $x<y$ in $\FA$ such that $x \ncov y$ in $\I$ and $(x,y) \cap \FA=\0$, then $\FA$ is graded.
\end{Lemma}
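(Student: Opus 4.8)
The statement asserts that a certain local non-obstruction condition on pairs in $\FA$ forces $\FA$ to be graded. The natural strategy is to show that, under the hypothesis, every maximal chain in $\FA$ can be ``completed'' to have the length predicted by the rank function of $\I$, and that in fact $\FA$ inherits a rank function from $\I$ up to a global shift. Concretely, I would first recall that $\I$ is graded (Lemma~\ref{rank}) and that $\FA$ is bounded: it has a $\hat1$ by Proposition~\ref{top}, and while $\FA$ need not have a single $\hat0$, Proposition~\ref{bot} tells us all its minimal elements have the \emph{same} rank $(n-\max A)/2$ in $\I$. So it suffices to prove that for any cover $x \cov y$ in $\FA$, the rank difference $\rho_{\I}(y) - \rho_{\I}(x)$ is a constant (which will be $1$ when $n \notin A$ and will be $1$ except for the covers out of the identity when $n \in A$; by Lemma~\ref{bij} we may as well assume $n \notin A$ and handle the pure case $\Fn$).

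**Key steps.** The heart of the argument is: if $x \cov y$ in $\FA$, then $x \cov y$ in $\I$ as well. Indeed, suppose not; then $x < y$ in $\I$ with $x \ncov y$ in $\I$, and since $x \cov y$ in $\FA$ we have $(x,y) \cap \FA = \0$ — but this is exactly the forbidden configuration ruled out by the hypothesis. Hence every cover in $\FA$ is a cover in $\I$, so $\rho_{\I}(y) = \rho_{\I}(x) + 1$ for every cover $x \cov y$ of $\FA$. Now take any maximal chain $x_0 \cov x_1 \cov \cd \cov x_k$ in $\FA$. Its bottom $x_0$ is a minimal element of $\FA$, so by Proposition~\ref{bot}, $\rho_{\I}(x_0) = (n - \max A)/2$; its top $x_k$ is the $\hat1$ of $\FA$ (the unique maximal element, by Proposition~\ref{top}), so $\rho_{\I}(x_k)$ is a fixed number independent of the chain. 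Telescoping, $k = \rho_{\I}(x_k) - \rho_{\I}(x_0)$ is independent of the chosen maximal chain. Therefore all maximal chains of $\FA$ have the same length, i.e. $\FA$ is graded, with rank function $\rho(x) = \rho_{\I}(x) - (n-\max A)/2$.

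**One subtlety to address.** I should be slightly careful that ``$x \cov y$ in $\FA$'' together with ``$x < y$ in $\I$'' really does give $(x,y)_\FA = \0$ and $x \ncov_\I y$ as the negation of the hypothesis, rather than some other escape. If $x \cov_\FA y$ but $x \cov_\I y$ too, there is nothing to prove for that cover. If $x \cov_\FA y$ but $x \ncov_\I y$, then by definition of the induced order there is no element of $\FA$ strictly between $x$ and $y$, i.e. $(x,y) \cap \FA = \0$, so the pair $(x,y)$ witnesses the forbidden configuration — contradiction. That is the only case analysis needed. I would also remark that the boundedness of $\FA$ (existence of $\hat1$) is what lets us talk about ``maximal chains'' having well-defined top; the several minimal elements are harmless precisely because Proposition~\ref{bot} pins down their common $\I$-rank.

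**Expected main obstacle.** There is essentially no obstacle: the lemma is a clean bookkeeping consequence of Lemmas~\ref{rank} and Propositions~\ref{top}, \ref{bot} once one observes that the hypothesis is exactly the statement ``every $\FA$-cover is an $\I$-cover.'' The only place requiring a moment's care is the reduction $n \in A$ via Lemma~\ref{bij} (so that one genuinely works with $\Fn$, which has no extra minimal element sitting below everything), but even that can be sidestepped by noting that Proposition~\ref{bot}'s rank statement already handles minimal elements uniformly. The real content of the paper's gradedness results therefore lies not here but in verifying the hypothesis of this lemma for $\Fm$, $\Fs$, and $\Fi$ — this lemma itself is just the bridge.
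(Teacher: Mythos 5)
Your proof is correct and follows essentially the same route as the paper: you observe that the hypothesis says precisely that every cover in $\FA$ is a cover in $\I$, so every maximal chain of $\FA$ is saturated in $\I$ and its length telescopes to $\rho_{\I}(\hat1)-\rho_{\I}(x_0)$, which is constant by Propositions~\ref{top} and~\ref{bot}. The extra worry about $n\in A$ is unnecessary, since Proposition~\ref{bot} already pins down the common $\I$-rank of the minimal elements in all cases, exactly as you note.
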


\begin{proof}
Let $C=\si \cov \cd \cov \hat1$ be a maximal chain in $\FA$; note that $\si$ is a minimal element of $\FA$. By the assumption, $C$ is saturated in $\I$. Hence, $\ell(C)=\rho(\hat1)-\rho(\si)=\rho(\hat1)-\frac{n-a}{2}$, where $\rho$ is the rank function of $\I$ and $a=\max{A}$. Thus $\FA$ is graded.
\end{proof}

The next lemma is used in the proofs of Lemmas~\ref{Fm}, \ref{Fs}, and \ref{Fi}, which, together with Lemma~\ref{graded}, show that $\Fm$, $\Fs$, and $\Fi$ are graded.

\begin{Lemma} \label{ineq}
Let $\si<\tau$ in $\I$. Then the label $(i,j)$ on any cover in $[\si,\tau]$ satisfies $i \geq \di(\si,\tau)$.
\end{Lemma}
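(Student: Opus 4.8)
The plan is to reduce the statement to the special case where $\si \cov \tau$ covers in $\I$, and then inspect Incitti's operation $\ct_{(i,j)}$ directly. Indeed, let $\di(\si,\tau)=d$, so that $\si(m)=\tau(m)$ for all $m<d$ and $\si(d)\neq\tau(d)$. Suppose some cover $\mu \cov \nu$ in the interval $[\si,\tau]$ carries the label $(i,j)$ with $i<d$. I would first argue that since $\si\leq\mu$, the agreement of $\si$ and $\tau$ on $[d-1]$ forces $\mu$ (and $\nu$) to also agree with $\si$ on $[i]$: more precisely, I expect $\si\leq\mu\leq\nu\leq\tau$ to constrain the values $\mu(m)$ for $m\leq i<d$. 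This is the step where I would invoke Definition~\ref{order}: if $\si$ and $\tau$ have the same restriction to an initial segment $[m]$, then every $\rho$ with $\si\leq\rho\leq\tau$ must have that same restriction, because $\si[m,l]\leq\rho[m,l]\leq\tau[m,l]=\si[m,l]$ for all $l$ pins down $|\rho([m])\cap[l,n]|$ for all $l$, hence pins down the set $\rho(\{1,\dots,m\})$, and an easy induction on $l$ then recovers each individual value $\rho(m)$.

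Granting that, $\mu$ agrees with $\si$ on $[i]$ (since $i<d=\di(\si,\tau)$ means $\si$ and $\tau$ agree on $[d-1]\supseteq[i]$). Now the label $(i,j)$ on $\mu\cov\nu$ means $\nu=\ct_{(i,j)}(\mu)$ for a suitable rise $(i,j)$ of $\mu$, and inspecting all six types in Definition~\ref{ct} shows that $\ct_{(i,j)}(\mu)$ moves the value in position $i$ strictly upward, i.e.\ $\nu(i)=\mu(j)>\mu(i)$; in particular $\nu(i)\neq\mu(i)$. But then $\nu$ does \emph{not} agree with $\si$ on $[i]$, contradicting that $\nu$ (being in $[\si,\tau]$) must also agree with $\si$ on $[i]$ by the same initial-segment argument. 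Hence no such label can occur, i.e.\ every label $(i,j)$ on a cover in $[\si,\tau]$ has $i\geq d=\di(\si,\tau)$.

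The main obstacle is the auxiliary claim that $\si\leq\rho\leq\tau$ together with $\si|_{[m]}=\tau|_{[m]}$ implies $\rho|_{[m]}=\si|_{[m]}$; everything else is a short verification. I would prove this by first noting that $\rho[m,l]=|\{i\le m : \rho(i)\ge l\}|$ is squeezed between $\si[m,l]$ and $\tau[m,l]$, which are equal, so $|\{i\le m:\rho(i)\ge l\}|=|\{i\le m:\si(i)\ge l\}|$ for every $l\in[n]$. Taking successive differences in $l$ shows that $\rho$ and $\si$ take the same \emph{multiset} of values on $\{1,\dots,m\}$, i.e.\ $\rho(\{1,\dots,m\})=\si(\{1,\dots,m\})$ as sets. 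To upgrade this to equality of the sequences $(\rho(1),\dots,\rho(m))$ and $(\si(1),\dots,\si(m))$, I would use the inequalities $\rho[k,l]\le\si[k,l]$ for $k<m$ (which are also equalities, since $\tau[k,l]=\si[k,l]$ as well for $k<m\le\,$the agreement segment): the same argument applied with $k$ in place of $m$ gives $\rho(\{1,\dots,k\})=\si(\{1,\dots,k\})$ for every $k\le m$, and intersecting consecutive ones yields $\rho(k)=\si(k)$ for each $k\le m$.

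Finally I would remark that the special structure of Incitti's labelling makes the type-by-type check in Definition~\ref{ct} essentially immediate: in every case the cycle notation for $\ct_{(i,j)}(\mu)$ begins with the transposition or cycle $(i,j,\dots)$, so position $i$ acquires the value $\mu(j)$, and since $(i,j)$ is a rise of $\mu$ we have $\mu(j)>\mu(i)$, giving $\nu(i)>\mu(i)\ge\mu(i)$ as needed (in fact $\nu(i)>\nu(j)$, the inversion created). This completes the argument; the statement then follows with $\di(\si,\tau)$ playing the role of the threshold $d$.
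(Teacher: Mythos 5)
Your proof is correct and rests on the same two pillars as the paper's own argument: Definition~\ref{order} and the observation that every type of $\ct_{(i,j)}$ in Definition~\ref{ct} strictly increases the value in position $i$. The only structural difference is that the paper derives its contradiction from the single inequality $\pi[i,\si(i)+1]>\tau[i,\si(i)+1]$ at the first cover of a chain and then finishes by induction along the chain, whereas you first establish the (stronger, and correctly proved) rigidity fact that every element of $[\si,\tau]$ agrees with $\si$ on the initial segment $[\di(\si,\tau)-1]$, which lets you dispose of an arbitrary cover in the interval directly without any induction.
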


\begin{proof}
Suppose $i<\di(\si,\tau)$ for the label $(i,j)$ on $\si \cov \pi \leq \tau$. Then $\pi(k)=\tau(k)$ for $k<i$ and $\tau(i)=\si(i)$. However, it follows from \Def~\ref{ct} that $\pi(i)>\si(i)$. Hence, $\pi[i,\si(i)+1]>\tau[i,\si(i)+1]$. By \Def~\ref{order}, this contradicts the fact that $\pi \leq \tau$. Thus $i \geq \di(\si,\tau)$. The result follows by induction.
\end{proof}

\begin{Lemma} \label{Fm}
Let $\si<\tau$ in $\Fm$, and let $C_I=\si \cov \si_1 \cov \cd \cov \si_k \cov \tau$ be the increasing $\si$-$\tau$-chain in $\I$ and $C_D=\si \cov \tau_k \cov \cd \cov \tau_1 \cov \tau$ the decreasing $\si$-$\tau$-chain in $\I$. Then $\{\si_1,\tau_1\} \cap \Fm \neq \0$.
\end{Lemma}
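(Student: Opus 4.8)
The plan is to argue by contradiction: suppose $\si<\tau$ in $\Fm$ but neither $\si_1$ (the element covering $\si$ in the increasing chain $C_I$) nor $\tau_1$ (the element covered by $\tau$ in the decreasing chain $C_D$) lies in $\Fm$. Since $\Fm = \bigcup_{i \ge 0}\F^{a-2i}$ collects all involutions with at most $a$ fixed points (of the right parity), and both $\si$ and $\tau$ have at most $a$ fixed points, the only way an intermediate involution can fail to lie in $\Fm$ is by having \emph{more than $a$} fixed points. Now recall from \Def~\ref{ct} how $\ct_{(i,j)}$ changes the fixed-point count: a Type~1 ($ff$) rise destroys two fixed points, Types~2 and 3 ($fe$, $ef$) destroy one fixed point, and Types~4, 5, 6 ($ee$, $ee$, $ed$) change the fixed-point count by at most... actually Type~6 ($ed$) creates two fixed points, Types~2/3 viewed going upward also can create one, etc. The key observation is that applying $\ct_{(i,j)}$ changes the number of fixed points by $-2$, $-1$, $0$, $+1$, or $+2$, and going \emph{up} in $\I$ never decreases $\inv$; one passes from $\si$ to $\tau$ and the fixed-point count must return to $\le a$ at both ends.

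The main work is to pin down \emph{where} the fixed point count can increase. Let $d = \di(\si,\tau)$. By Lemma~\ref{ineq}, every label $(i,j)$ occurring on a cover inside $[\si,\tau]$ has $i \ge d$. First I would analyze the first cover $\si \cov \si_1$ in $C_I$: its label is some $(d, j)$ (it must be exactly $d$, since the increasing chain is lex-minimal by Lemma~\ref{incr}, and in fact the first label equals $\di(\si,\tau)$ in the first coordinate). If this first step creates fixed points — i.e., it is a Type~6 ($ed$) move, the only upward move from Definition~\ref{ct} that strictly increases the fixed-point count — then $\si(d) > d$ and after the move $d$ becomes a fixed point; but then I would show that $\tau(d) = \si(d) > d$ forces... here one uses \Def~\ref{order} together with $\si < \tau$ to derive that $d = \di(\si,\tau)$ is incompatible with $\tau[d, \si(d)] < \si_1[d,\si(d)]$, or rather one shows the increasing chain's first move cannot be of Type~6 when both endpoints have $\le a$ fixed points, because the number of fixed points on the chain would have to come back down, forcing a later $ff$-type move whose label contradicts minimality. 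Symmetrically, for $C_D$ I would examine the last cover $\tau_1 \cov \tau$: since $C_D$ is the decreasing chain, $\ct_{(i,j)}$ was applied with a large label at this end; I would use \Def~\ref{order} applied at the coordinate $\di(\si,\tau)$ again (note $\tau_1$ and $\si$ agree on $[1,d-1]$ and $\tau_1(d) \ge \si(d)$, with $\tau_1(d) = \tau(d)$ unless the last label has first coordinate $d$) to conclude $\tau_1$ cannot have acquired extra fixed points relative to the bound $a$.

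The delicate point — which I expect to be the main obstacle — is handling the case where \emph{both} $\si_1$ and $\tau_1$ have exactly $a+2$ fixed points (or one has $a+1$ and the matter is forced by parity, so really $a+2$), i.e., the chain ``bulges'' above the fixed-point ceiling at both ends. One has to show this configuration is impossible. I would do this by combining: (1) the first label of $C_I$ is $(d,j_1)$ for the \emph{smallest} valid $j_1$, and if $\si_1$ has $a+2$ fixed points then the move $\si \cov \si_1$ is of Type~6 with $\si(d) = j_1$ and $\si(d) \ge d+2$; (2) the symmetric statement at the top; and (3) comparing with $\tau$ via Definition~\ref{order} to extract a violated inequality $\si[k,l] \le \tau[k,l]$ — concretely, the bulge at the bottom forces $\si$ to have few fixed points among small indices while $\tau$ must have even fewer, and iterating this (as in the proof of Proposition~\ref{top}) overcounts. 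Once the impossibility of a simultaneous bulge is established, at least one of $\si_1,\tau_1$ has $\le a$ fixed points and hence lies in $\Fm$, giving $\{\si_1,\tau_1\} \cap \Fm \ne \0$ as required.
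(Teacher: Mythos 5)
Your overall skeleton matches the paper's: argue by contradiction, focus on $h=\di(\si,\tau)$, use Lemma~\ref{ineq} to force the first label of $C_I$ and the last label of $C_D$ to have first coordinate $h$, and then derive a violation of \Def~\ref{order}. But the execution has concrete errors. First, you misclassify how the covering moves change the fixed-point count: by parity no move can change it by $\pm 1$ (Types~2 and~3 preserve the count, they do not ``destroy one fixed point''), and the unique fixed-point-\emph{increasing} move is Type~5 (the crossing $ee$-rise), not Type~6 (the $ed$-rise, which merely swaps partners of two $2$-cycles and preserves the count). This is not cosmetic: the contradiction must be read off from the \emph{types} of the two extremal covers, namely that $\si\cov\si_1$ being Type~5 forces $h$ to be an exceedance of $\si$, while $\tau_1\cov\tau$ being the fixed-point-destroying Type~1 forces $h$ to be a \emph{fixed point} of $\tau_1$. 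You never extract the second fact, and with your type identification you could not.

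Second, your intermediate claims that the first move of $C_I$ individually cannot raise the fixed-point count, and symmetrically that $\tau_1$ individually cannot have extra fixed points, are both false (the conclusion of the lemma is a disjunction precisely because each can fail separately; see, e.g., the chains in the proof of \Pro~\ref{-o-}), so the arguments you sketch for them (``forcing a later $ff$-move whose label contradicts minimality'') cannot be completed. You do correctly recognise that the real content is ruling out the simultaneous case, but there your plan stalls: you propose comparing $\si$ with $\tau$ via \Def~\ref{order} and ``iterating'' an overcount, which yields nothing, since both $\si(h)>h$ and $\tau(h)>h$ in that configuration. The comparison that works is between $\si$ and $\tau_1$: since all labels in $[\si,\tau]$ have first coordinate $\geq h$, the two permutations agree on $[h-1]$, and $\si(h)>h=\tau_1(h)$ gives $\si[h,h+1]>\tau_1[h,h+1]$, contradicting $\si\leq\tau_1$. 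That single inequality is the proof; as written, your proposal never reaches it.
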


\begin{proof}
Assume to the contrary that neither $\si_1$ nor $\tau_1$ belongs to $\Fs$. Let $h=\di(\si,\tau)$, and let $(i_\si,j_\si)$ and $(i_\tau,j_\tau)$ be the labels on $\si \cov \si_1$ and $\tau_1 \cov \tau$, respectively. By Lemma~\ref{ineq}, $i_\si,i_\tau \geq h$. Since $\si(h) \neq \tau(h)$, it follows that $h$ is in some label on $C_I$ and some label on $C_D$. Since $C_I$ is increasing, $i_\si=h$, and since $\si_1 \notin \Fm$, $h$ is an exceedance of $\si$ (Type~5). Since $C_D$ is decreasing, $i_\tau=h$, and since $\tau_1 \notin \Fm$, $h$ is a fixed point of $\tau_1$ (Type~1). Hence, $\si[h,h+1]>\tau_1[h,h+1]$. By \Def~\ref{order}, this contradicts the fact that $\si \leq \tau_1$.
\end{proof}

\begin{Lemma} \label{Fs}
Let $\si<\tau$ in $\Fs$, and let $C_I=\si \cov \si_1 \cov \cd \cov \si_k \cov \tau$ be the increasing $\si$-$\tau$-chain in $\I$ and $C_D=\si \cov \tau_k \cov \cd \cov \tau_1 \cov \tau$ the decreasing $\si$-$\tau$-chain in $\I$. Then $\{\si_1,\tau_1\} \cap \Fs \neq \0$.
\end{Lemma}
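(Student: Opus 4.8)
The plan is to argue by contradiction: assume that neither $\si_1$ nor $\tau_1$ lies in $\Fs$, and derive a contradiction. (We may assume $\si\ncov\tau$, so that $\si_1$ and $\tau_1$ are genuinely in the open interval $(\si,\tau)$; together with Lemma~\ref{graded} this is what will give the gradedness of $\Fs$.) The structure parallels the proof of Lemma~\ref{Fm}, but, as announced, more work is needed.

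First I would record how $\ct_{(i,j)}$ changes the number of fixed points. Inspecting \Def~\ref{ct} type by type, this number is unchanged for a rise of Type~2, 3, 4, or~6, decreases by $2$ for Type~1, and increases by $2$ for Type~5. Since the number of fixed points of an involution in $S_n$ has the parity of $n$, hence of $a:=\min A$, and membership in $\Fs$ means having at least $a$ fixed points, the assumption $\si_1\notin\Fs$ forces $\si$ to have exactly $a$ fixed points and $\si\cov\si_1$ to be of Type~1, while $\tau_1\notin\Fs$ forces $\tau$ to have exactly $a$ fixed points and $\tau_1\cov\tau$ to be of Type~5.

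Next I would pin down the relevant index. \Def~\ref{ct} shows that $\ct_{(i,j)}(\si)$ differs from $\si$ only in positions $\ge i$ and that position $i$ is actually changed (indeed $\ct_{(i,j)}(\si)(i)=\si(j)>\si(i)$ in every type). Hence, with $h=\di(\si,\tau)$, position $h$ is altered at some cover on $C_I$ and at some cover on $C_D$, and the label there has first coordinate $\le h$; by Lemma~\ref{ineq} it is also $\ge h$, so it equals $h$. Because the labels of $C_I$ increase, its smallest label is the first one, and because those of $C_D$ decrease, its smallest is the last one; thus $\la(\si,\si_1)=(h,j_\si)$ and $\la(\tau_1,\tau)=(h,j_\tau)$ for suitable $j_\si,j_\tau$. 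Reading off Types~1 and~5: $h$ and $j_\si$ are fixed points of $\si$ with no $k\in(h,j_\si)$ satisfying $\si(k)\in(h,j_\si)$; and $\tau(h)=p$, $\tau(p)=h$, $\tau(m)=m$, $\tau(j_\tau)=j_\tau$ for some $h<m<j_\tau<p$, with $\tau_1$ obtained from $\tau$ by replacing the transposition $(h,p)$ and the fixed points $m,j_\tau$ by the transpositions $(h,m)$ and $(j_\tau,p)$.

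Finally I would obtain the contradiction from the decreasing chain. By the uniqueness in Lemma~\ref{decr}, $C_D$ may be built greedily from $\tau$ downward, so $\tau_1\cov\tau$ carries the lex-least label occurring on $C_D$; it therefore suffices to exhibit a cover $\rho\cov\tau$ with $\rho\ge\si$ and $\la(\rho,\tau)<(h,j_\tau)$ that is \emph{not} of Type~5, since then the last step of $C_D$ has a label $(h,j)$ with $j$ below some index where no Type~5 cover can sit, and its source, having at least $a$ fixed points, lies in $\Fs$ — contradicting $\tau_1\notin\Fs$. Let $j_0$ be the least index $>h$ with $\tau(j_0)<\tau(h)$; since $\tau(m)=m<p=\tau(h)$ we have $h<j_0\le m<j_\tau$, and as every element of $(h,j_0)$ is $<j_0\le m<p=\tau(h)$, the minimality of $j_0$ forces $\tau$ to have no fixed point in $(h,j_0)$, so no cover of $\tau$ with a label $(h,j)$, $j\le j_0$, is of Type~5. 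It remains to produce such a cover that is defined and lies above $\si$: here one distinguishes whether $\tau(j_0)$ is a fixed point, an exceedance, or a deficiency of $\tau$ (in the first two cases $\ict_{(h,j_0)}(\tau)$ itself serves, arising by a Type~2 or Type~4 move; in the deficiency case one must instead pass to the least admissible label above it, using that $\tau$ maps $(h,j_0)$ above $p$ to understand the local shape), and in each case the relation $\ict_{(h,j)}(\tau)\ge\si$ is checked via the criterion of \Def~\ref{order}, combining $\si\le\tau$, $\si\le\tau_1$, and the explicit form of $\tau$ near $h$ obtained above. This last verification — in particular the interplay of the inequalities $\si\le\tau_1$ and $\si_1\le\tau$ with the freeness conditions on the rise $(h,j_\si)$ of $\si$ and the rise $(h,j_\tau)$ of $\tau_1$ — is the step I expect to be the main obstacle.
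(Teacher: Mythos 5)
Your opening moves are sound and coincide with the paper's: the bookkeeping of how each type of $\ct_{(i,j)}$ changes the number of fixed points is correct, it does force $\si\cov\si_1$ to be of Type~1 and $\tau_1\cov\tau$ to be of Type~5 with both $\si$ and $\tau$ having exactly $a=\min A$ fixed points, and the argument that both extreme labels have first coordinate $h=\di(\si,\tau)$ (via Lemma~\ref{ineq} plus the monotonicity of the label sequences) is exactly the paper's. The local description of $\tau$ versus $\tau_1$ (transposition $(h,p)$ and fixed points $m,j_\tau$ with $h<m<j_\tau<p$) is also right. The problem is the endgame. Your contradiction rests on the claim that, ``by the uniqueness in Lemma~\ref{decr}, $C_D$ may be built greedily from $\tau$ downward,'' so that exhibiting \emph{any} cover $\rho\cov\tau$ with $\rho\ge\si$ and $\la(\rho,\tau)<(h,j_\tau)$ forces the last label of $C_D$ to drop below $(h,j_\tau)$. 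Uniqueness of the decreasing chain does not give this: a competing cover with a smaller label only contradicts uniqueness if you can extend it downward to a full decreasing $\si$-$\rho$-chain whose top label exceeds $\la(\rho,\tau)$, and nothing guarantees that. The only global characterisation of $C_D$ available in the paper is that it is lex-\emph{maximal read from the bottom} (Section~\ref{EL}), which constrains its first label, not its last. On top of this, the construction of the competing cover itself --- the case analysis on $\tau(j_0)$ and, above all, the verification $\ict_{(h,j)}(\tau)\ge\si$ --- is exactly the part you defer as ``the main obstacle,'' so the proposal is a plan with both its structural premise and its hardest computation unresolved.

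The paper closes the argument in a quite different and much shorter way, entirely inside $C_D$: let $m'$ be the first index (going down) at which $h$ stops being an exceedance; since position $h$ can only be altered by a cover whose label has first coordinate $h$, and $C_D$ is decreasing, the covers from $\tau_{m'}$ up to $\tau$ carry labels $(h,j_1),\dots,(h,j_{m'})$ with $j_1<\cdots<j_{m'}$, and each such cover strictly increases the value at $h$, so $\tau_1(h)>\tau_2(h)>\cdots>\tau_{m'-1}(h)$. Because $\tau_{m'}\cov\tau_{m'-1}$ turns the fixed point $h$ into an exceedance (Type~1 or~2), $\tau_{m'-1}(h)\ge j_{m'}>j_1$, whence $\tau_1(h)>j_1$; but the crossing condition of the Type~5 cover $\tau_1\cov\tau$ demands $\tau_1(h)<j_\tau=j_1$. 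This avoids constructing any alternative cover and any comparison with $\si$ beyond what Lemma~\ref{ineq} already provides. To salvage your route you would need to actually prove the greedy-from-the-top property of the decreasing chain (or replace it by producing a complete decreasing chain through your $\rho$), which is substantially more work than the lemma itself.
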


\begin{proof}
Assume to the contrary that neither $\si_1$ nor $\tau_1$ belongs to $\Fs$. Let $h=\di(\si,\tau)$, and let $(i_\si,j_\si)$ and $(i_\tau,j_\tau)$ be the labels on $\si \cov \si_1$ and $\tau_1 \cov \tau$, respectively. By Lemma~\ref{ineq}, $i_\si,i_\tau \geq h$. Since $\si(h) \neq \tau(h)$, it follows that $h$ is in some label on $C_I$ and some label on $C_D$. Since $C_I$ is increasing, $i_\si=h$, and since $\si_1 \notin \Fs$, $h$ is a fixed point of $\si$ (Type~1). Since $C_D$ is decreasing, $i_\tau=h$, and since $\tau_1 \notin \Fs$, $h$ is an exceedance of $\tau_1$ (Type~5).

Let $m$ be such that $h$ is an exceedance of $\tau_1,\p,\tau_{m-1}$ and a fixed point of $\tau_m$ (with $\tau_{k+1}=\si$). Then the labels on $\tau \voc \tau_1 \voc \cd \voc \tau_m$ are $(h,j_1),\p,(h,j_m)$, where $j_1<j_2<\cdots<j_m$. Since $\tau_1>\tau_2>\cdots>\tau_{m-1}$, $\tau_1(h)>\tau_2(h)>\cdots>\tau_{m-1}(h)$. Since $h$ is a fixed point of $\tau_m$ but an exceedance of $\tau_{m-1}$, the cover $\tau_m \cov \tau_{m-1}$ is of Type~1 or 2, whence $\tau_{m-1}(h)=j_m$ or $\tau_{m-1}(h)=\tau_m(j_m)>j_m$, respectively; hence, $\tau_{m-1}(h) \geq j_m$. Therefore, $j_1<j_m \leq \tau_{m-1}(h)<\tau_1(h)$. However, since the cover $\tau_1 \cov \tau$ is of Type~5, $\tau_1(h)<j_1$, which is a contradiction.
\end{proof}

\begin{Lemma} \label{Fi}
Let $\si<\tau$ in $\Fi$, and let $C_I=\si \cov \si_1 \cov \cd \cov \si_k \cov \tau$ be the increasing $\si$-$\tau$-chain in $\I$ and $C_D=\si \cov \tau_k \cov \cd \cov \tau_1 \cov \tau$ the decreasing $\si$-$\tau$-chain in $\I$. Then $\{\si_1,\tau_1\} \cap \Fi \neq \0$.
\end{Lemma}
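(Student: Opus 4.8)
The plan is to argue by contradiction, assuming that neither $\si_1$ nor $\tau_1$ lies in $\Fi$, and to reduce to the two situations already treated in Lemmas~\ref{Fm} and \ref{Fs}. The key preliminary observation is how a covering step in $\I$ affects the number of fixed points: inspecting \Def~\ref{ct}, one sees that passing from $\pi$ to $\ct_{(i,j)}(\pi)$ lowers the number of fixed points by $2$ if $(i,j)$ is of Type~1, raises it by $2$ if $(i,j)$ is of Type~5, and leaves it unchanged if $(i,j)$ is of Type~2, 3, 4, or 6. Since $\Fi=\F^{\geq a_1}\cap\F^{\leq a_2}$, we have $\si,\tau\in\F^{\leq a_2}$ and $\si,\tau\in\F^{\geq a_1}$, so Lemma~\ref{Fm} (applied with $a:=a_2$) gives that $\si_1$ or $\tau_1$ has at most $a_2$ fixed points, and Lemma~\ref{Fs} (applied with $a:=a_1$) gives that $\si_1$ or $\tau_1$ has at least $a_1$ fixed points. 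If one and the same of $\si_1,\tau_1$ satisfies both conclusions, it lies in $\Fi$, a contradiction; hence either $\si_1$ has more than $a_2$ fixed points and $\tau_1$ fewer than $a_1$, or $\si_1$ has fewer than $a_1$ fixed points and $\tau_1$ more than $a_2$. In either case the first label of $C_I$ and the last label of $C_D$ have first coordinate $h:=\di(\si,\tau)$ (as in Lemmas~\ref{Fm} and \ref{Fs}, by Lemma~\ref{ineq} and because $h$ occurs in some label on each chain, as $\si(h)\neq\tau(h)$), and then the fixed-point bookkeeping above determines the types of the covers $\si\cov\si_1$ and $\tau_1\cov\tau$.

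Suppose first that $\si_1$ has more than $a_2$ fixed points and $\tau_1$ fewer than $a_1$. Then both covers raise the number of fixed points, so $\si\cov\si_1$ and $\tau_1\cov\tau$ are of Type~5; in particular $h$ is an exceedance of $\si$ and of $\tau_1$, and $\tau_1\cov\tau$ is crossing. I would imitate the descent argument from the proof of Lemma~\ref{Fs} along $C_D$: with $(h,j_1),\p,(h,j_m)$, where $j_1<\cd<j_m$, denoting the labels of those covers of $C_D$ whose first coordinate is $h$, one has $\tau_m(h)=\si(h)$, which is now an \emph{exceedance} rather than a fixed point, so $\tau_m\cov\tau_{m-1}$ is of Type~3, 4, 5, or 6. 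For Types~3, 4, and 5 one still gets $\tau_{m-1}(h)\geq j_m$, whence, since the value at $h$ strictly increases along $C_D$, $j_1<j_m\leq\tau_1(h)$, contradicting the crossing condition $\tau_1(h)<j_1$. The remaining sub-cases, namely $\tau_m\cov\tau_{m-1}$ of Type~6 and $m=1$, I would close using in addition the inequality $\tau(h)\geq\si_1(h)$, which follows from $\si_1\leq\tau$ and \Def~\ref{order} evaluated at $(h,\si_1(h))$.

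Suppose instead that $\si_1$ has fewer than $a_1$ fixed points and $\tau_1$ more than $a_2$. Then both covers lower the number of fixed points, so $\si\cov\si_1$ and $\tau_1\cov\tau$ are of Type~1; hence $h$ is a fixed point of $\si$ and of $\tau_1$, the number of fixed points of $\si$ is $a_1$, and that of $\tau$ is $a_2>a_1$. Here I would first note that no cover of $C_D$ strictly below $\tau_1\cov\tau$ has first coordinate $h$ --- otherwise the lower element of that cover would have $h$ as a deficiency, which can never be the first coordinate of a suitable rise --- so that $\si(h)=\tau_1(h)=h$ while $\tau(h)>h$; I would then play this against the Bruhat inequalities forced by $\si_1\leq\tau$ and by $\si<\tau_1$, together with $\exc(\si)-\exc(\tau)=(a_2-a_1)/2\geq1$ (an involution with $a$ fixed points has $(n-a)/2$ exceedances), to reach a contradiction --- in effect showing that this configuration cannot occur.

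The main obstacle is precisely these two mixed cases: the clean single Bruhat inequality that closed the proofs of Lemmas~\ref{Fm} and \ref{Fs} is no longer directly available, so one must patch the Lemma~\ref{Fs} descent through the Type-6 sub-case in the first situation and, in the second situation, rule the configuration out by combining the information carried by $C_I$ and $C_D$.
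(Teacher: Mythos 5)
Your opening reduction is correct, and it is in fact a tidier packaging of the first two cases of the paper's own proof: where the paper reruns the arguments of Lemmas~\ref{Fm} and \ref{Fs} verbatim, you invoke them as black boxes (legitimately, since $\si,\tau\in\Fi\seq F_n^{\leq a_2}\cap F_n^{\geq a_1}$ and the chains $C_I$, $C_D$ depend only on $\si$ and $\tau$), and your two ``mixed'' configurations are exactly the paper's third case. The problem is everything after that. From ``I would imitate'' onward you have only a sketch, and the gap cannot be closed, because the lemma as stated fails in precisely your first mixed configuration. Take $n=8$, $a_1=2$, $a_2=4$, $\si=21435678=(1\,2)(3\,4)$ and $\tau=42316587=(1\,4)(5\,6)(7\,8)$, both in $\Fo_8^{2:4}$. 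The interval $[\si,\tau]$ in $\Io_8$ has three atoms, three coatoms and six maximal chains; the unique increasing one is $21435678\cov 42315678\cov 42316578\cov 42316587$ with labels $(1,3),(5,6),(7,8)$, and the unique decreasing one is $21435678\cov 21435687\cov 21436587\cov 42316587$ with labels $(7,8),(5,6),(1,3)$. Thus $\si_1=(1\,4)$ has six fixed points and $\tau_1=(1\,2)(3\,4)(5\,6)(7\,8)$ has none, so $\{\si_1,\tau_1\}\cap\Fo_8^{2:4}=\0$. This is your first mixed case with $m=1$ (only one label on $C_D$ has first coordinate $h=1$), and your proposed auxiliary inequality $\tau(h)\geq\si_1(h)$ holds with equality ($4\geq 4$), so it yields no contradiction --- nor can anything else.

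You should not fault yourself for failing to close this case, since the paper's own proof disposes of its third case with the single remark that a cover changes the number of fixed points by $0$ or $\pm 2$, and that remark produces no contradiction either: every cover in the example above respects it. The published argument has the same hole. What can be salvaged is instructive, and it uses exactly your fixed-point bookkeeping, just applied to the whole chain rather than to its endpoints: in the mixed case the count along $C_I$ runs $a_2\to a_2+2\to\cdots\to a_1$ in steps of $0,\pm2$, so some $\si_i$ with $2\leq i\leq k$ has exactly $a_2$ fixed points and hence lies in $(\si,\tau)\cap\Fi$ (in the example, $\si_2=(1\,4)(5\,6)$). The resulting weaker conclusion, $(\si,\tau)\cap\Fi\neq\0$ whenever $\si\ncov\tau$ in $\I$, is all that Lemma~\ref{graded} actually requires for Proposition~\ref{FmFsFigraded}, so the right move is to weaken the statement of the lemma rather than to fight the mixed cases.
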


\begin{proof}
Assume to the contrary that neither $\si_1$ nor $\tau_1$ belongs to $\Fi$. Then $\si$ and $\tau$ have $a_2$ fixed points while $\si_1$ and $\tau_1$ have $a_2+2$ fixed points, $\si$ and $\tau$ have $a_1$ fixed points while $\si_1$ and $\tau_1$ have $a_1-2$ fixed points, or $\si$ has $a_2$ fixed points and $\tau$ has $a_1$ fixed points (or vice versa) while $\si_1$ has $a_2+2$ fixed points and $\tau_1$ has $a_1-2$ fixed points.

In the first case, we get a contradiction as in the proof of Lemma~\ref{Fm}, and in the second case, we get a contradiction as in the proof of Lemma~\ref{Fs}. In the third case, we get a contradiction because if $\pi \cov \pi'$, then the number of fixed points of $\pi'$ is the same as, two more than, or two less than the number of fixed points of $\pi$.
\end{proof}

\begin{Prop} \label{FmFsFigraded}
The posets $\Fm$, $\Fs$, and $\Fi$ are graded.
\end{Prop}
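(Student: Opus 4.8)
The plan is to obtain \Pro~\ref{FmFsFigraded} as an immediate consequence of Lemma~\ref{graded} together with Lemmas~\ref{Fm}, \ref{Fs}, and \ref{Fi}. Each of $\Fm$, $\Fs$, and $\Fi$ is of the form $\FA$ for a suitable $A \seq \{0,1,\p,n\}$, so by Lemma~\ref{graded} it suffices to check, in each of the three cases, that there are no $x<y$ in the poset with $x \ncov y$ in $\I$ and $(x,y) \cap \FA=\0$.

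So fix such an $x<y$, say in $\FA \in \{\Fm,\Fs,\Fi\}$, with $x \ncov y$ in $\I$. Since $x<y$ holds in $\I$ as well (the order being induced) but $x$ and $y$ are not related by a cover there, every saturated $x$-$y$-chain in $\I$ has length at least $2$. In particular the unique increasing $x$-$y$-chain in $\I$, which exists by Lemma~\ref{incr}, has the form $x \cov \si_1 \cov \cd \cov y$, and the unique decreasing $x$-$y$-chain, which exists by Lemma~\ref{decr}, has the form $x \cov \cd \cov \tau_1 \cov y$; both $\si_1$ and $\tau_1$ lie in the open interval $(x,y)$. Applying Lemma~\ref{Fm} (when $\FA=\Fm$), Lemma~\ref{Fs} (when $\FA=\Fs$), or Lemma~\ref{Fi} (when $\FA=\Fi$) with $\si=x$ and $\tau=y$ yields $\{\si_1,\tau_1\} \cap \FA \neq \0$, so $(x,y) \cap \FA \neq \0$, as required. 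Invoking Lemma~\ref{graded} in each of the three cases then shows that $\Fm$, $\Fs$, and $\Fi$ are graded.

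There is essentially no obstacle remaining at this point: the real work has already been carried out in Lemmas~\ref{Fm}--\ref{Fi} (the genuinely delicate one being Lemma~\ref{Fs}, with Lemma~\ref{Fi} reducing to it and to Lemma~\ref{Fm}), and the present proposition is just the bookkeeping that feeds those statements into Lemma~\ref{graded}. The only minor point worth noting is the degenerate case in which an $x$-$y$-chain in $\I$ has length exactly $2$, so that $\si_1$ and $\tau_1$ may coincide; this is harmless, since a single element of $\FA$ strictly between $x$ and $y$ is all that Lemma~\ref{graded} requires.
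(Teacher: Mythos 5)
Your proposal is correct and follows exactly the paper's route: the paper's own proof of \Pro~\ref{FmFsFigraded} is the one-line deduction from Lemmas~\ref{graded}, \ref{Fm}, \ref{Fs}, and \ref{Fi}, and you have merely spelled out the (correct) bookkeeping, including the harmless degenerate case where $\si_1=\tau_1$.
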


\begin{proof}
This follows from Lemmas~\ref{graded}, \ref{Fm}, \ref{Fs}, and \ref{Fi}.
\end{proof}

In the following two results, we consider the sets $A$ for which $\FA$ is not graded.

\begin{Prop} \label{-o-}
If there is an $i \in [2,n-4]$ such that $i \in A$ but $i-2,i+2 \notin A$, then $\FA$ is not graded.
\end{Prop}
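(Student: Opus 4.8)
Here is how I would prove Proposition~\ref{-o-}.

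The plan is to exhibit two elements of $\FA$ joined by two saturated chains of different lengths. This suffices: prepending to these a saturated chain running down from the smaller element to a minimal element of $\FA$, and appending a saturated chain running up from the larger element to the maximum $\hat1$ of $\FA$ (which exists by \Pro~\ref{top}), we obtain two maximal chains of $\FA$ of different lengths, so $\FA$ is not graded. The construction takes place inside a copy of $S_6$ sitting inside $S_n$, the remaining coordinates being occupied by fixed points and ``spectator'' transpositions.

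In one-line notation in $S_6$, put
\[
\si=214356=(12)(34),\quad\mu_1=215436=(12)(35),\quad\mu_2=425136=(14)(35),\quad\tau=426153=(14)(36),
\]
each with two fixed points, and also $\pi=214365=(12)(34)(56)$ (no fixed points), $\omega=423156=(14)$ (four fixed points), and $\nu=423165=(14)(56)$ (two fixed points). A routine check with Incitti's description (\Def~\ref{ct}, Lemma~\ref{cover}) shows that $\si\cov\mu_1\cov\mu_2$, $\si\cov\pi\cov\nu$, $\si\cov\omega\cov\nu$ and $\nu\cov\tau$ are covers in $\Io_6$ (each via an explicit suitable rise; for instance $\si\cov\mu_1$ uses the Type~3 rise $(3,5)$, $\si\cov\omega$ the Type~5 rise $(1,3)$, and $\nu\cov\tau$ the Type~2 rise $(3,5)$), and that $\mu_2\cov\tau$ in $\Io_6$ because $\mu_2<\tau$ while $\mu_2$ lies one rank below $\tau$ (Lemma~\ref{rank}). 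Moreover $\si$ has exactly four covers in $\Io_6$, namely $\omega$, $\pi$, $(13)(24)=341256$ and $\mu_1$, of which only $\omega$ and $\pi$ lie below $\nu$; since $\Io_6$ is graded and $\nu$ lies two ranks above $\si$, it follows that $[\si,\nu]_{\Io_6}=\{\si,\pi,\omega,\nu\}$.

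Now fix an involution $\pi_0$ of $\{7,\dots,n\}$ with exactly $i-2$ fixed points, which exists since $2\leq i\leq n-4$ and $i\equiv n\pmod 2$. For an involution $\eta$ of $\{1,\dots,6\}$, let $\ol\eta$ denote the involution of $[n]$ acting as $\eta$ on $\{1,\dots,6\}$ and as $\pi_0$ on $\{7,\dots,n\}$; then $\ol\eta$ has $i-2$ more fixed points than $\eta$ has in $\{1,\dots,6\}$. It follows readily from \Def~\ref{order} that $\eta\mapsto\ol\eta$ embeds $\Io_6$ into $\Io_n$ and that $[\ol\al,\ol\be\,]_{\Io_n}=\{\ol\eta\mid\eta\in[\al,\be\,]_{\Io_6}\}$ for all $\al\leq\be$ in $\Io_6$: if $\ol\al\leq\zeta\leq\ol\be$, then $\zeta[6,7]\leq\ol\be[6,7]=0$ forces $\zeta$ to stabilise $\{1,\dots,6\}$, after which comparing the statistics of \Def~\ref{order} on $\{7,\dots,n\}$ forces $\zeta$ to restrict to $\pi_0$ there.

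Consequently $\ol\si,\ol\mu_1,\ol\mu_2,\ol\tau,\ol\nu\in\F^i\seq\FA$, whereas $\ol\pi\in\F^{i-2}$ and $\ol\omega\in\F^{i+2}$ do not lie in $\FA$ because $i\pm2\notin A$. Hence $[\ol\si,\ol\nu]_{\Io_n}\cap\FA=\{\ol\si,\ol\nu\}$, so $\ol\si\cov\ol\nu$ in $\FA$; together with the $\Io_n$-covers $\ol\si\cov\ol\mu_1\cov\ol\mu_2\cov\ol\tau$ and $\ol\nu\cov\ol\tau$ this produces a saturated $\ol\si$--$\ol\tau$-chain in $\FA$ of length $3$ and one of length $2$, whence $\FA$ is not graded. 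The only real work is the bookkeeping of the second paragraph: reading the covers and the shape of $[\si,\nu]_{\Io_6}$ off Incitti's description, and checking the handful of Bruhat comparabilities ($\omega\leq\nu$, $\pi\leq\nu$, $(13)(24)\not\leq\nu$, $\mu_1\not\leq\nu$, $\mu_2\leq\tau$) from \Def~\ref{order} --- all finite, elementary computations. The conceptual point is simply that the cover $\ol\si\cov\ol\nu$ of $\FA$ skips an entire rank of $\Io_n$, which is exactly what makes the two chains differ in length, and that this is unaffected by the rest of $A$ because the two elements skipped over have $i-2$ and $i+2$ fixed points.
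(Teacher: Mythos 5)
Your proposal is correct and follows essentially the same strategy as the paper's proof: exhibit, inside a copy of $S_6$, two elements of the restricted poset joined by saturated chains of lengths $3$ and $2$ (the shorter one skipping elements with $i-2$ and $i+2$ fixed points), and then pad with fixed points and spectator transpositions on $\{7,\dots,n\}$ to land in $\F^i \seq \FA$. The paper uses a different explicit pair ($\si=124365$, $\tau=426153$) and is terser about verifying the interval structure and the embedding $S_6\times S_{n-6}\hookrightarrow S_n$, but the underlying idea and the reduction are the same.
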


\begin{proof}
We first show that $\Fo_6^2$ is not graded. Let $\si=124365$ and $\tau=426153$, and consider the interval $[\si,\tau]$. Both $C_1=\si \cov 143265 \cov 423165 \cov \tau$ and $C_2=\si \cov 126453 \cov 216453 \cov \tau$ are $\si$-$\tau$-chains in $\Io_6$. However, $(\si,216453) \cap \Fo_6^2=\0$, whence $C_1$ is a $\si$-$\tau$-chain in $\Fo_6^2$ of length $3$, while $C_2-\{126453\}$ is a $\si$-$\tau$-chain in $\Fo_6^2$ of length $2$. Thus $\Fo_6^2$ is not graded.

Now we have to obtain the right number of fixed points. To achieve this, concatenate each of the involutions above with the sequence
\[
78 \cd (i+4)(i+6)(i+5) \cd n(n-1). \qedhere
\]
\end{proof}

\begin{Prop} \label{o-o}
If there is an $i \notin A$ and a positive integer $m$ such that $i-2,i+2m \in A-\{n\}$, then $\FA$ is not graded.
\end{Prop}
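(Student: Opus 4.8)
The plan is to imitate the proof of Proposition~\ref{-o-}: exhibit one explicit poset of the prescribed form that is not graded, and then transfer the conclusion to the general $\FA$ by a concatenation argument. First I would normalize. By Lemma~\ref{bij} we may assume $n \notin A$, so that $A = A - \{n\}$. Since $i - 2 \in A$ and $i \notin A$, replacing $i + 2m$ by the least element of $A$ exceeding $i$ yields a pair $(i, m)$ still satisfying the hypothesis and for which in addition $\{i, i+2, \p, i+2m-2\} \cap A = \0$; fix such a pair. Put $n_0 = 2m + 4$; then $t := i - 2$ and $s := (n - n_0 - t)/2$ are nonnegative integers, the latter because $i - 2 \in A$ has the parity of $n$ and $i + 2m \leq n - 2$.

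For the base example I would work inside $S_{n_0}$ with the fixed-point-free involutions $\si_0 = (1,2)(3,4)\cdots(n_0-1,n_0)$ and $\tau_0 = (1,n_0)(2,3)(4,5)\cdots(n_0-2,n_0-1)$ and the single transposition $v = (1,n_0)$, which has $n_0 - 2$ fixed points. One checks that $\si_0 < v < \tau_0$ in $\Io_{n_0}$ and --- this is the crux --- that $\si_0$ is the only fixed-point-free involution $\leq v$, that $v$ is the only single transposition $\geq \si_0$, and that $\tau_0$ is the only fixed-point-free $\sigma$ with $v \leq \sigma \leq \tau_0$; the engine behind all three is that $v[1,l] = 1$ for every $l$ (so any $\sigma \geq v$ sends $1$ to $n_0$) and that $v[2k,2k+1] = 1$ for every $k$ (so any $\sigma \leq v$ is forced, block by block, to equal $\si_0$), a short positional analysis. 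It follows that neither $(\si_0, v)_{\Io_{n_0}}$ nor $(v, \tau_0)_{\Io_{n_0}}$ contains an involution with $0$ or $n_0 - 2$ fixed points, hence $\si_0 \cov v \cov \tau_0$ in $\Fo_{n_0}^{\{0,n_0-2\}}$, a saturated $\si_0$-$\tau_0$-chain of length $2$. On the other hand $\si_0, \tau_0 \in \Fo_{n_0}^0$, which is graded with rank given by $(\inv(\cdot) - n_0/2)/2$; as this is affine of slope $1$ in the rank of $\Io_{n_0}$, a cover of $\Fo_{n_0}^0$ is a cover of $\Io_{n_0}$, hence a cover of $\Fo_{n_0}^{\{0,n_0-2\}}$, so there is also a saturated $\si_0$-$\tau_0$-chain in $\Fo_{n_0}^{\{0,n_0-2\}}$ of length $(\inv(\tau_0) - \inv(\si_0))/2 = 2m + 2 > 2$. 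Thus $\Fo_{n_0}^{\{0,n_0-2\}}$ is not graded.

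Finally I would concatenate: replace every involution $\pi$ occurring above by $\pi \times w$, where $w$ is the involution on positions $n_0+1, \p, n$ consisting of $t$ fixed points followed by $s$ adjacent transpositions. Since the Bruhat order splits over this block decomposition, $[\pi \times w, \pi' \times w]_{\Io_n} = [\pi, \pi']_{\Io_{n_0}} \times \{w\}$, every step of the two chains remains a cover in $\FA$: a cover of $\Io_{n_0}$ stays one after padding, while for each of the two jump steps $\pi \cov \pi'$ every element of $(\pi, \pi')_{\Io_{n_0}}$ has a number of fixed points in $\{2, 4, \p, 2m\}$, so that $\pi \times w$ and $\pi' \times w$ are separated in $\Io_n$ only by involutions whose number of fixed points lies in $\{i, i+2, \p, i+2m-2\}$, a set disjoint from $A$. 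The two padded chains are then saturated $(\si_0 \times w)$-$(\tau_0 \times w)$-chains in $\FA$ of lengths $2$ and $2m + 2$, so $\FA$ is not graded. The step I expect to be the main obstacle is the ``crux'' --- rigorously establishing that $(\si_0, v)$ and $(v, \tau_0)$ in $\Io_{n_0}$ avoid all involutions with $0$ or $n_0 - 2$ fixed points; by comparison the normalization is a brief argument about $A$, and the concatenation reduces to the product decomposition of Bruhat intervals together with the fact that $\Fo_{n_0}^0$ is graded with a rank function affine in the rank of $\Io_{n_0}$.
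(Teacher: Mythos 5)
Your proof is correct, and it follows the paper's overall strategy (a small explicit non-graded example, transferred to the general $\FA$ by padding with a fixed suffix on the last $n-n_0$ letters, with $m$ first minimised so that the intermediate fixed-point counts avoid $A$), but your base example and the way you verify it are genuinely different. The paper takes $\si=(k-1,k)$ and $\tau=(1,k)$, both with $k-2$ fixed points, and lets the short chain dip \emph{down} through the fixed-point-free $\pi=(1,2)(3,4)\cdots(k-1,k)$, while its long chain of length $k-2$ stays inside $\Fo_k^{k-2}$ via $fe$-rises and is written out explicitly; you instead take both endpoints fixed-point-free and let the short chain rise \emph{up} through the transposition $v=(1,n_0)$, producing the long chain inside $\Fo_{n_0}^0$ from the already-established gradedness of $\F^0$ (legitimately available from Proposition~\ref{FmFsFigraded}, so no circularity). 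The more substantial difference is how the two covers of the short chain are certified. You do it by direct positional analysis of the Bruhat criterion; your three ``crux'' claims are all true and provable exactly as you indicate, though the claim that $v$ is the only transposition above $\si_0$ really rests on $\si_0[k,k+1]=1$ for odd $k$ rather than on either of the two facts about $v$ you name as the engine. The paper instead uses a slicker rank-counting argument: the rank difference in $\Io_{n_0}$ between the endpoints of each alleged cover equals half the difference in their numbers of fixed points, and since a cover in $\Io_{n_0}$ changes the number of fixed points by at most $2$, every element of the open interval has a fixed-point count strictly in between. That argument applies verbatim to your example (the rank of $v$ exceeds that of $\si_0$ by $m+1$, and the rank of $\tau_0$ exceeds that of $v$ by $m+1$, while the fixed-point counts differ by $2m+2$) and would let you delete the positional analysis entirely.
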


\begin{proof}
We first prove that $\Fo_k^{\{0,k-2\}}$, where $k \geq 6$ is even, is not graded. Let $\si=12 \cdots (k-2)k(k-1)$ and $\tau=k23 \cdots (k-1)1$, and consider the interval $[\si,\tau]$. We obtain a $\si$-$\tau$-chain $C$ in $\Io_k$ by $k-2$ $fe$-rises with labels $(k-2,k-1),(k-3,k-2),\p,(1,2)$ (from $\si$ to $\tau$). We also obtain a $\si$-$\tau$-chain in $\I$ by $(k-2)/2$ $ff$-rises with labels $(1,2),(3,4),\p,(k-3,k-2)$, followed by $(k-2)/2$ crossing $ee$-rises with labels $(k-3,k-1),(k-5,k-3),\p,(1,3)$.

Let $\pi$ be the fixed point-free involution obtained after the $ff$-rises. Since each $ff$-rise decreases the number of fixed points and $\Io_k$ is graded, $(\si,\pi) \cap \Fo_k^{\{0,k-2\}}=\0$, and since each crossing $ee$-rise increases the number of fixed points and $\Io_k$ is graded, $(\pi,\tau) \cap \Fo_k^{\{0,k-2\}}=\0$. Hence, $C$ is a $\si$-$\tau$-chain in $\Fo_k^{\{0,k-2\}}$ of length $k-2$, while $\si \cov \pi \cov \tau$ is a $\si$-$\tau$-chain in $\Fo_k^{\{0,k-2\}}$ of length $2$. Thus $\Fo_k^{\{0,k-2\}}$ is not graded. Figure~\ref{small} illustrates the situation when $k=6$.

Now we have to obtain the right number of fixed points. Assume, \WLOGtwo, that $m$ is minimal, let $k=2m+4$, and concatenate each of the involutions above with the sequence
\[
(k+1)(k+2) \cd (k+i-2)(k+i)(k+i-1) \cd n(n-1). \qedhere
\]
\end{proof}

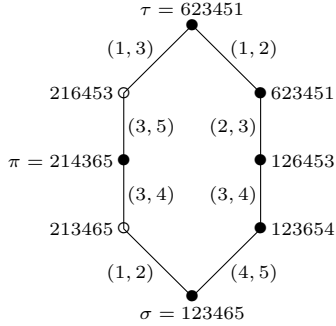
\begin{figure}[htb]
\begin{center}
\begin{tikzpicture}[scale=0.9]
  \node at (0,0) {$\bullet$}; \node at (0,4) {$\bullet$};
  \node at (1,1) {$\bullet$}; \node at (1,2) {$\bullet$}; \node at (1,3) {$\bullet$};
  \node at (-1,1) {$\circ$}; \node at (-1,2) {$\bullet$}; \node at (-1,3) {$\circ$};
  \node [below] at (0,0) {\tiny $\si=123465$}; \node [above] at (0,4) {\tiny $\tau=623451$};
  \node [right] at (1,1) {\tiny $123654$}; \node [right] at (1,2) {\tiny $126453$}; \node [right] at (1,3) {\tiny $623451$};
  \node [left] at (-1,1) {\tiny $213465$}; \node [left] at (-1,2) {\tiny $\pi=214365$}; \node [left] at (-1,3) {\tiny $216453$};
  \node at (0.9,0.35) {\tiny $(4,5)$}; \node at (0.6,1.5) {\tiny $(3,4)$}; \node at (0.6,2.5) {\tiny $(2,3)$}; \node at (0.9,3.65) {\tiny $(1,2)$};
  \node at (-0.9,0.35) {\tiny $(1,2)$}; \node at (-0.6,1.5) {\tiny $(3,4)$}; \node at (-0.6,2.5) {\tiny $(3,5)$}; \node at (-0.9,3.65) {\tiny $(1,3)$};
  \draw (0,0)--(1,1)--(1,2)--(1,3)--(0,4)--(-1,3)--(-1,2)--(-1,1)--(0,0);
\end{tikzpicture}
\end{center}
\caption{Two $\si$-$\tau$-chains in $\Io_6$ of length $4$, and two $\si$-$\tau$-chains in $\Fo_6^{\{0,4\}}$ of length $4$ (right) and length $2$ (left); the involutions marked by a $\bullet$ belong to $\Fo_6^{\{0,4\}}$, and the involutions marked by a $\circ$ belong to $\Io_6-\Fo_6^{\{0,4\}}$. On the edges (covers in $\Io_6$) are the labels $(i,j)$.} \label{small}
\end{figure}

We are now ready to prove the main result of this paper:

\begin{proof}[Proof of \Th~\ref{MT}]
The first claim follows from Lemma~\ref{bij} and \Pros~\ref{FmFsFigraded}, \ref{-o-}, and \ref{o-o}. (It is readily checked that if $\Fn$ does not belong to $\{\0,\Fm,\Fs,\Fi\}$, then either there is an $i \in [2,n-4]$ such that $i \in A$ but $i-2,i+2 \notin A$, or there are an $i \notin A$ and a positive integer $m$ such that $i-2,i+2m \in A-\{n\}$.) If $n \notin A$, then the second claim follows as in the proof of Lemma~\ref{graded}, together with Lemma~\ref{rank}. If $n \in A$, then each element's rank increases by 1. The third claim follows from the second claim and \Pro~\ref{top}.
\end{proof}


\section{EL-shellability of $\F^0$} \label{EL}

In this section, we give a new proof of \Th~\ref{C-C-T}, due to Can, Cherniavsky, and Twelbeck~\cite{C-C-T}. Our proof is largely based on the same main idea as their proof, together with the technique used in the proof of Lemma~\ref{Fm}. The proof in \cite{C-C-T} goes as follows:

Let $\si<\tau$ in $\F^0$. It follows from, e.g., \Th~\ref{Hultman} and the paragraphs following it, that there exists a $\si$-$\tau$-chain in $\I$ that is contained in $\F^0$. Let $C$ be the lex-maximal such chain. The idea of the proof is to show that $C$ is decreasing. Then, by reversing the lexicographic order on the set $\{(i,j) \in [n]^2 \mid i<j\}$ (i.e., by letting $(i_1,j_1)<(i_2,j_2)$ if and only if $i_1>i_2$, or $i_1=i_2$ and $j_1>j_2$), one obtains an edge-labelling of $\F^0$ such that in each interval, there is an increasing $\si$-$\tau$-chain which is lex-minimal. By Lemma~\ref{decr} and the remark following it, this is an EL-labelling of $\F^0$.

We use the same main idea, namely, to show that the decreasing $\si$-$\tau$-chain in $\I$ is contained in $\F^0$, and then reverse the lexicographic order. However, we give a direct proof of this fact. By using the same technique as in the proof of Lemma~\ref{Fm}, we get a very short argument.

\begin{Lemma} \label{F0}
Let $\si<\tau$ in $\F^0$ and let $C_D=\si \cov \tau_k \cov \cd \cov \tau_1 \cov \tau$ be the decreasing $\si$-$\tau$-chain in $\I$. Then $\tau_1,\p,\tau_k \in \F^0$.
\end{Lemma}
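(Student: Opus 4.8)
The plan is to mirror the argument of Lemma~\ref{Fm}, using a minimality (or rather descent along the chain) argument combined with Definition~\ref{order}. Suppose for contradiction that some $\tau_i \notin \F^0$, i.e. some $\tau_i$ has a fixed point. Since $\si, \tau \in \F^0$ are fixed point-free, and consecutive involutions in a cover differ in their fixed-point count by $0$ or $\pm 2$, a fixed point must be ``created'' somewhere along $C_D$ as we descend from $\tau$ and ``destroyed'' again before reaching $\si$. More precisely, I would pick the largest index $i$ with $\tau_i \notin \F^0$ (so $\tau_{i-1} \in \F^0$, with the convention $\tau_0 = \tau$), and also look just below, using that $\si \in \F^0$.

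First I would set $h = \di(\si,\tau)$ and recall from Lemma~\ref{ineq} that every label $(i,j)$ on a cover in $[\si,\tau]$ has first coordinate $\geq h$. Since $\si(h) \neq \tau(h)$, the value $h$ must occur as the first coordinate of some label on $C_D$; because $C_D$ is decreasing, the \emph{first} cover $\tau_1 \cov \tau$ must have label $(h, j_\tau)$ for some $j_\tau$ — this is exactly the structure exploited in Lemma~\ref{Fm}. The key case analysis is on the type of the cover $\tau_1 \cov \tau$: since $\tau \in \F^0$ has no fixed points, and we are assuming $\tau_1 \notin \F^0$, the rise $(h,j_\tau)$ of $\tau_1$ must create two fixed points when passing to $\tau$, which forces it to be a crossing $ee$-rise (Type~5) of $\tau_1$ — equivalently $h$ and $j_\tau$ are the two fixed points of $\tau$... wait, but $\tau$ has no fixed points. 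Rather: $\ct_{(h,j_\tau)}$ going \emph{up} from $\tau_1$ to $\tau$ must turn two fixed points of $\tau_1$ into an exceedance-structure; inspecting Definition~\ref{ct}, the only type whose ``$\ict$'' (inverse) removes two fixed points is Type~5 read downward, so $\tau_1 \cov \tau$ is of Type~5 with $h$ a fixed point of $\tau_1$. Then $\tau_1[h,h+1] = 0 < 1 = \si[h,h+1]$ since $h$ is a fixed point of neither $\si$ nor any descendant... and here I compare $\si$ with $\tau_1$: as $\si \le \tau_1$, Definition~\ref{order} demands $\si[h,h+1] \le \tau_1[h,h+1]$, contradiction, because $h$ being a fixed point of $\tau_1$ gives $\tau_1[h,h+1]=0$ while $\si(h) \neq h$ with $\si(h) \neq \tau(h)$, and in fact $\si(h) > h$ forces $\si[h,h+1] \ge 1$. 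This is the crux.

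The main obstacle I anticipate is verifying carefully that the cover $\tau_1 \cov \tau$ is forced to be of Type~5 with $h$ a fixed point of $\tau_1$: one must rule out that $\tau_1 \in \F^0$ outright, and this is where ``$\tau \in \F^0$ while the chain $C_D$ is strictly decreasing and $h = \di(\si,\tau)$'' all get used together. Since $\si, \tau$ are fixed point-free but (by contradiction) some $\tau_i$ is not, and since the decreasing chain forces the first label to start with $h$, the number of fixed points must go from $0$ (at $\tau$) strictly up and back to $0$ (at $\si$); looking at the first step, if $\tau_1 \in \F^0$ we would instead repeat the argument with $\tau_1$ in place of $\tau$ (an induction on $k$), so the genuine contradiction arises exactly at the first index where a fixed point appears. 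I would phrase this cleanly by induction on $k$: the base case $k=0$ is vacuous, and in the inductive step either $\tau_1 \in \F^0$ (apply the inductive hypothesis to the interval $[\si,\tau_1]$, whose decreasing chain is $C_D$ minus its top cover), or $\tau_1 \notin \F^0$, in which case the Type~5 analysis above yields $\tau_1[h,h+1] = 0 < \si[h,h+1]$, contradicting $\si \le \tau_1$ via Definition~\ref{order}. The remaining routine point is confirming $\si[h,h+1] \geq 1$, which holds because $h$ is not a fixed point of $\si$ and, being the first place $\si$ and $\tau$ disagree with $\tau(h) \ne h$... in fact I should double-check the direction, but morally $\si(h) \geq h+1$ is forced along the lines of Lemma~\ref{Fm}'s last step, giving $\si[h,h+1] \ge 1$ and completing the contradiction.
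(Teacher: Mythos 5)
Your overall architecture is exactly the paper's: reduce to showing $\tau_1 \in \F^0$ (the rest follows since $\si \cov \tau_k \cov \cd \cov \tau_1$ is again the decreasing chain of its interval), set $h=\di(\si,\tau)$, use Lemma~\ref{ineq} plus the decreasingness of $C_D$ to force the top label to be $(h,j_\tau)$, deduce that $h$ is a fixed point of $\tau_1$, and contradict $\si \leq \tau_1$ via \Def~\ref{order} at $[h,h+1]$. Two points need repair, though. First, your type identification is garbled: if $\tau_1 \notin \F^0$ then, since $\tau$ is fixed-point-free, the cover $\tau_1 \cov \tau$ must \emph{destroy} fixed points going up, and the only such type is Type~1 (an $ff$-rise), not Type~5; Type~5 \emph{creates} two fixed points going up and, read at $\tau_1$, makes $h$ an \emph{exceedance} of $\tau_1$, which would wreck your inequality $\tau_1[h,h+1]<\si[h,h+1]$. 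Your conclusion ``$h$ is a fixed point of $\tau_1$'' is the right one, but it is attached to the wrong type and the sentence justifying it is internally inconsistent; the paper simply notes the cover must be of Type~1.

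Second, the step you flag as ``routine'' --- that $\si(h)>h$, so that $\si[h,h+1] \geq \tau_1[h,h+1]+1$ --- is genuinely needed and you never establish it; a priori $h$ could be a deficiency of $\si$, in which case position $h$ contributes $0$ to $\si[h,h+1]$ and no contradiction results. The paper closes this by looking at the \emph{increasing} chain $C_I$: its first label is $(h,j_\si)$, and since no suitable rise has a deficiency in its first coordinate and $\si$ has no fixed points, $h$ must be an exceedance of $\si$. (Alternatively, one can argue directly: if $\si(h)=g<h$ then $\tau(g)=\si(g)=h$ since $g<\di(\si,\tau)$, whence $\tau(h)=g=\si(h)$, contradicting $h=\di(\si,\tau)$.) Also note that $\tau_1[h,h+1]$ need not equal $0$; the correct statement is that positions $i<h$ contribute equally to $\si[h,h+1]$ and $\tau_1[h,h+1]$ (all labels in $[\si,\tau]$ have first coordinate $\geq h$, so positions below $h$ never move), while position $h$ contributes $1$ to the former and $0$ to the latter. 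With these two fixes your argument coincides with the paper's proof.
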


\begin{proof}
Since the decreasing $\si$-$\tau_1$-chain in $\I$ is $\si \cov \tau_k \cov \cd \cov \tau_2 \cov \tau_1$, it suffices to prove that $\tau_1 \in \F^0$.

Let $h=\di(\si,\tau)$, let $C_I=\si \cov \si_1 \cov \cd \cov \si_k \cov \tau$ be the increasing $\si$-$\tau$-chain in $\I$, and let $(i_\si,j_\si)$ and $(i_\tau,j_\tau)$ be the labels on $\si \cov \si_1$ and $\tau_1 \cov \tau$, respectively. By Lemma~\ref{ineq}, $i_\si,i_\tau \geq h$. Since $\si(h) \neq \tau(h)$, it follows that $h$ is in some label on $C_I$ and some label on $C_D$. Since $C_I$ is increasing, $i_\si=h$, and since $\si$ has no fixed points, $h$ is an exceedance of $\si$ (Type~4, 5, or 6). Since $C_D$ is decreasing, $i_\tau=h$, and were $\tau_1 \notin \F^0$, $h$ would be a fixed point of $\tau_1$ (Type~1). Hence, by \Def~\ref{order}, $\tau_1 \in \F^0$.
\end{proof}

We can now complete the proof of \Th~\ref{C-C-T}:

\begin{proof}[Proof of \Th~\ref{C-C-T}]
Let $\si<\tau$ in $\F^0$. By Lemma~\ref{F0}, the decreasing $\si$-$\tau$-chain in $\I$ is contained in $\F^0$. If we can show that this chain is lex-maximal, then by reversing the lexicographic order and invoking Lemma~\ref{decr}, we are done.

In order to obtain a contradiction, let $C=\si_1 \cov \cd \cov \si_k$ be the lex-maximal $\si$-$\tau$-chain in $\I$, and assume that it is not decreasing; say that $\la(\si_1,\si_2) \leq \la(\si_2,\si_3)$. By Lemma~\ref{incr}, $\si_1 \cov \si_2 \cov \si_3$ is lex-minimal among the $\si_1$-$\si_3$-chains in $\I$. Hence, $\si_1 \cov \si_2' \cov \si_3 \cov \cd \cov \si_k$, where $\si_1 \cov \si_2' \cov \si_3$ is the decreasing $\si_1$-$\si_3$-chain, is lex-larger than $C$, which is a contradiction.
\end{proof}

Is it possible to use the same idea to prove that every interval in $\FA$ is EL-shellable for some $A \neq \{0\}$? Unfortunately, the answer is no, since for all $A \neq \{0\}$ (except the trivial case $A=\{n\}$ and the case when $\FA=\I$), it is possible to find $\si_1<\tau_1$ and $\si_2<\tau_2$ in $\FA$, such that the increasing $\si_1$-$\tau_1$-chain and the decreasing $\si_2$-$\tau_2$-chain in $\I$, are of length 2 and are not contained in $\FA$.

\section*{Acknowledgements}

The author thanks Axel Hultman for helpful comments and fruitful discussions.



\end{document}